\theoremstyle{plain}
\newtheorem{thm}{Theorem}[section]
\crefname{thm}{Theorem}{Theorems}
\newtheorem{lem}[thm]{Lemma}
\crefname{lem}{Lemma}{Lemmas}
\newtheorem{prop}[thm]{Proposition}
\crefname{prop}{Proposition}{Propositions}
\newtheorem{cor}[thm]{Corollary}
\crefname{cor}{Corollary}{Corollaries}
\theoremstyle{definition}
\newtheorem{dfn}[thm]{Definition}
\crefname{dfn}{Definition}{Definitions}
\newtheorem{ex}[thm]{Example}
\crefname{ex}{Example}{Examples}
\theoremstyle{remark}
\newtheorem{rem}[thm]{Remark}
\crefname{rem}{Remark}{Remarks}
\crefname{figure}{Figure}{Figures}
\crefname{section}{Section}{Sections}
\crefname{subsection}{Subsection}{Subsections}
\numberwithin{equation}{section}
\newcommand{\RR}{\mathbb{R}}
\newcommand{\ZZ}{\mathbb{Z}}
\newcommand{\qop}{\triangleleft}
\newcommand{\Aut}{\mathrm{Aut}}
\newcommand{\Inn}{\mathrm{Inn}}
\newcommand{\Dis}{\mathrm{Dis}}
\newcommand{\GAlex}{\mathrm{GAlex}}
\newcommand{\rank}{\mathrm{rank}}
\newcommand{\Grp}{\mathrm{Grp}}
\newcommand{\Schgraph}{\Gamma}
\newcommand{\dSch}{d^{X \curvearrowleft G}}
\newcommand{\SchgraphInn}{\Gamma^{\Inn}}
\newcommand{\dInn}{d^{\Inn}}
\newcommand{\SchgraphDis}{\Gamma^{\Dis}}
\newcommand{\dDis}{d^{\Dis}}
\numberwithin{equation}{section}
\definecolor{darkgreen}{cmyk}{1,0,1,.2}
\definecolor{darkorchid}{rgb}{1.0, 0.5, 0}
\definecolor{persimmon}{rgb}{0.93, 0.35, 0.0}
\definecolor{lightgray}{rgb}{0.7,0.7,0.7}
\newdimen\theight
\def\TeXref#1{%
             \leavevmode\vadjust{\setbox0=\hbox{{\tt
                     \quad\quad  {\small \textrm #1}}}%
             \theight=\ht0
             \advance\theight by \lineskip
             \kern -\theight \vbox to
             \theight{\rightline{\rlap{\box0}}%
             \vss}%
             }}%
\begin{document}

\title{Metrics for quandles}
\author{Kohei Iwamoto, Ryoya Kai, and Yuya Kodama}
\subjclass[2020]{Primary 57K12; Secondary 20F65, 53C35}
\date{}
\keywords{quandle, Schreier graph, quasi-isometry}

\address{(K. Iwamoto) 
Graduate School of Science and Engineering, Ritsumeikan University, Nojihigashi
1-1-1, Kusatsu, Shiga, 525-8577, Japan}

%Please write the e-mail address of the author. 
\email{ra0061ir@ed.ritsumei.ac.jp}

% \webpage{} % You can use this command to indicate your webpage.
%-----------------------------
\address{(R. Kai) Department of Mathematics, Graduate School of Science, Osaka Metropolitan University, 3-3-138, Sugimoto, Sumiyoshi-ku, Osaka, 558-8585, Japan}

%Please write the e-mail address of the author. 
\email{sw23889b@st.omu.ac.jp}

% \webpage{} % You can use this command to indicate your webpage.
%-----------------------------
\address{(Y. Kodama) 
Graduate School of Science and Engineering,
Kagoshima University, 1-21-35, Korimoto, Kagoshima, Kagoshima 890-0065, Japan
}
%Please write the e-mail address of the author. 
\email{yuya@sci.kagoshima-u.ac.jp}

% You can use the following command to write the abstract of this article:
\begin{abstract}
    A quandle is an algebraic system originating in knot theory, which can be regarded as a generalization of the conjugation of groups.
    This structure naturally defines two subgroups of its automorphism group, which are called the inner automorphism group and the displacement group, and they act on the quandle from the right. 
    For a quandle with such groups being finitely generated, we investigate the graph structures induced from the actions, and induced metric spaces.
    The graph structures are defined by the notion of the Schreier graph, which is a natural generalization of the Cayley graph for a group.
    In particular, the metric associated with the displacement group for an important class of quandles, namely, generalized Alexander quandles, is studied in detail.
    We show that such a metric space is quasi-isometric to the displacement group with a word metric.
    Finally, we provide some examples quasi-isometric to typical metric spaces.
\end{abstract}

\maketitle

%------------------------------------------------------------------------------
\section{Introduction}\label{Introduction}

A \emph{quandle} is an algebraic system,
which is a generalization of the conjugation of a group.
The notion of quandles
appears in many branches of mathematics,
for example, 
in knot theory as an invariant of knots,
and in symmetric space theory
as a discretization of symmetric spaces.
The axioms of quandles
correspond to the fundamental transformations
for knot diagrams called the Reidemeister moves
(see \cite{Kamada-2017-SurfaceKnots4Space} for details),
and correspond to the properties 
of point symmetries of symmetric spaces
(see \cite{Loos-1969-SymmetricSpacesGeneralTheory} for details).
In these fields,
one often focuses on finite quandles because they give explicit and computable knot invariants, and are regarded as a discretization of compact symmetric spaces.
On the other hand, there are many interesting examples of countable quandles.
The knot quandle of a non-trivial knot 
in the $3$-sphere is countable.
This quandle is a knot invariant 
defined in a similar way
to the fundamental group.
Additionally,
discrete subquandles in non-compact symmetric spaces
are countable quandles in general.
In this paper,
we focus on such countable quandles.

Similar to group theory, it is more difficult to study infinite quandles than finite ones.
One reason is that it is difficult 
to understand algebraic structures on infinite sets.
Thus, we approach the difficulty by 
defining another structure for the infinite set.
Here, we recall techniques for geometric group theory.
Finitely generated groups are naturally equipped with another structure different from the group structure:
let $G$ be a finitely generated group and let $S$ be its finite generating set.
Then they give a graph structure for $G$ called the Cayley graph.
The graph structure induces a metric on $G$ by the path metric.
The metric depends on the choice of the generating set $S$,
but the quasi-isometry class is determined independently of that.
In other words,
the quasi-isometry invariant 
for the metric space can be regarded 
as an invariant of the group.
Hence, these notions are important
for the study of finitely generated groups.
The notion of the Cayley graph 
is generalized to the Schreier graph,
whose set of vertices is a set,
and whose edges are defined by a group action on the set
(see \cref{subsec:SchGraph}).
Connected components of this graph 
correspond to the orbits of the action,
and become metric spaces by the path metric 
induced by the graph structure.
The metrics depend on the choice of the generating set,
but quasi-isometry classes of metric spaces are 
uniquely determined up to the choice of that
(see \cref{prop:qi_indep_ch_genset_sch_graph}).

A quandle structure naturally defines two groups acting on the quandle.
One of these is called the \emph{inner automorphism group}, which is a group generated by the point symmetries.
The other is called the \emph{displacement group}, which roughly corresponds to the identity component of the inner automorphism group.
In fact, for a connected symmetric space, the displacement group is equal to the identity component of the inner automorphism group.
In this paper, we introduce metrics on a quandle by the Schreier graphs 
with respect to their natural actions.
\begin{dfn}[\cref{def:sch_graph_qdle_inn_act,def:sch_graph_qdle_dis_act}]
  Let $X$ be a quandle.
  \begin{enumerate}
    \item 
    The Schreier graph with respect to the action of the inner automorphism group $\Inn(X)$ with a generating set $A$ is denoted by $\SchgraphInn_A$ and is called the \emph{inner graph}.
    The induced metric on each connected component is denoted by $\dInn_A$, and is called the \emph{inner metric}.

    \item 
    The Schreier graph with respect to the action of the displacement group $\Dis(X)$ with a generating set $U$ is denoted by $\SchgraphDis_U$, and is called the \emph{displacement graph}.
    The induced metric on each connected component is denoted by $\dDis_U$, and is called the \emph{displacement graph}.
  \end{enumerate}
\end{dfn}
In particular, this idea for case (1) is a generalization of the notion of the Cayley graph for a quandle, which was defined by \cite{Winker-1984-QUANDLESKNOTINVARIANTSNFOLD}, and has been studied by several researchers \cite{Hoste-2018-EnumerationProcessRacks,Crans-2019-FiniteNquandlesTorusTwobridge,Mellor-2021-NquandlesLinksa}.
By rephrasing the properties of the Schreier graph in terms of our graph of quandles, we immediately obtain the following.

\begin{thm}[{\cref{thm:qi_indep_ch_genset_sch_graph_inn,thm:qi_indep_ch_genset_sch_graph_dis}}]\label{quandle qi class}
  Let $X$ be a quandle, and let $O \subset X$ be a connected component. 
  Then the following hold:
  \begin{enumerate}[font=\normalfont]
      \item For any finite generating sets $A, B \subset \Inn(X)$, the identity map 
      \begin{align*} 
          \mathrm{id}|_{O}\colon (O, d^{\Inn}_{A}) \to (O, d^{\Inn}_{B}) 
      \end{align*}
      is a quasi-isometry.
      \item For any finite generating sets $U, V \subset \Dis(X)$, the identity map 
      \begin{align*} 
          \mathrm{id}|_{O}\colon (O, d^{\Dis}_{U}) \to (O, d^{\Dis}_{V}) 
      \end{align*}
      is a quasi-isometry.
  \end{enumerate}
\end{thm}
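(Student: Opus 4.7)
The plan is to reduce both assertions to the general Schreier graph result \cref{prop:qi_indep_ch_genset_sch_graph}, which already says that changing the finite generating set of a group action produces Schreier metrics related by a quasi-isometric identity map on each orbit. The graphs $\SchgraphInn_A$ and $\SchgraphDis_U$ are, by their very definitions, the Schreier graphs for the actions $X \curvearrowleft \Inn(X)$ and $X \curvearrowleft \Dis(X)$, so all I need is to interpret the connected component $O$ as a single orbit under each of these actions.

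First I would identify $O$ with both an $\Inn(X)$-orbit and a $\Dis(X)$-orbit. The first identification is built into the standard definition of a connected component of a quandle. For the second, the inclusion $\Dis(X)\cdot x \subset \Inn(X)\cdot x$ is automatic since $\Dis(X)$ is a subgroup of $\Inn(X)$. For the reverse inclusion I would use that $\Dis(X)$ is generated by products $s_x s_y^{-1}$ of two inner point symmetries: given $x,y\in O$ with $s_x(y)=z$, the element $s_x s_y^{-1}\in \Dis(X)$ sends $y$ to $z$ as well, and iterating this shows that every $\Inn(X)$-translate of a fixed basepoint in $O$ is also a $\Dis(X)$-translate.

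With this identification, statement (1) is the application of \cref{prop:qi_indep_ch_genset_sch_graph} to $X \curvearrowleft \Inn(X)$ with generating sets $A$ and $B$, and statement (2) is the same proposition applied to $X \curvearrowleft \Dis(X)$ with $U$ and $V$. In each case the two metrics live on the common underlying set $O$, so the map $\mathrm{id}|_O$ is well-defined, and the cited proposition immediately yields that it is a quasi-isometry.

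The main obstacle, insofar as there is one, is not the application step itself but the preparatory orbit identification; one must verify carefully that the notions of connected component used for a quandle, for the inner action, and for the displacement action all coincide. Modulo this bookkeeping, the theorem is essentially a translation of the general Schreier graph statement into the language of quandles.
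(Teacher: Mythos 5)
Your proposal is correct and follows essentially the same route as the paper: both theorems are deduced by applying the general Schreier-graph statement (\cref{prop:qi_indep_ch_genset_sch_graph}) to the actions of $\Inn(X)$ and $\Dis(X)$, after identifying the connected component $O$ with an orbit of each action (the paper handles the coincidence of $\Inn$- and $\Dis$-orbits by citing \cref{prop:properties_displ_grp}(4) rather than reproving it, and your sketch of that step is sound up to the paper's composition convention, under which the element sending $y$ to $s_x(y)$ is $s_y^{-1}s_x$ rather than $s_xs_y^{-1}$).
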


Therefore, we can now investigate the geometry of quandles. 
We note that for a finitely generated quandle, the inner automorphism group is finitely generated, but the displacement group may not be finitely generated.
By \cref{quandle qi class},
if both the inner automorphism group 
and the displacement group are finitely generated,
then we have two quasi-isometry classes for the quandle.
In general, they are not quasi-isometric.

\begin{thm}[{\cref{thm:fgqdle_sch_graph_inn_dis_notqi}}]\label{Inn vs Dis}
    There exist a quandle $X$ and its connected component $O$ which satisfy the following properties:
    \begin{enumerate}[font=\normalfont]
        \item The groups $\Inn(X)$ and $\Dis(X)$ are finitely generated.
        \item For any finite generating set $A$ of $\Inn(X)$ and $U$ of $\Dis(X)$, the metric spaces $(O, \dInn_A)$ and $(O, \dDis_{U})$ are not quasi-isometric.
    \end{enumerate}
\end{thm}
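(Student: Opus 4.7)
To prove this existence theorem, I would construct an explicit generalized Alexander quandle whose two Schreier metrics on a connected component have incomparable large-scale geometry, and then use volume growth (a quasi-isometry invariant) to distinguish them. By \cref{quandle qi class}, it suffices to exhibit one pair of finite generating sets $A \subseteq \Inn(X)$ and $U \subseteq \Dis(X)$ for which the two spaces fail to be quasi-isometric; the statement for all choices then follows automatically.

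The candidate example is $X = \GAlex(\ZZ^2, \phi)$ with the hyperbolic matrix $\phi = \begin{pmatrix} 2 & 1 \\ 1 & 1 \end{pmatrix} \in \mathrm{SL}_2(\ZZ)$, so the quandle operation is $x * y = \phi(x - y) + y$. A direct calculation gives $S_y \circ S_z^{-1}(x) = x + (I - \phi)(y - z)$, so $\Dis(X)$ consists of the translations by $(I - \phi)\ZZ^2$; since $\det(I - \phi) = -1$, this is all of $\ZZ^2$, acting freely and transitively on $X$. The map $S_0 \colon x \mapsto \phi x$ normalizes $\Dis(X)$ by the action of $\phi$, so $\Inn(X) \cong \ZZ^2 \rtimes_\phi \ZZ$, which is the Sol group. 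Both groups are finitely generated, and $X = \ZZ^2$ is a single connected component $O$ under either action.

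I would then compare the volume growth of the two metrics on $O$. Since $\Dis(X)$ acts freely and transitively, $(O, d^{\Dis}_U)$ is isometric to the Cayley graph of $\ZZ^2$, hence has polynomial growth of degree $2$. For $(O, d^{\Inn}_A)$, every generator acts on $\ZZ^2$ as an affine map $x \mapsto \phi^n x + c$ with $|n|$ and $\|c\|$ bounded, so iterating shows that any $v$ reachable from $0$ in at most $k$ steps satisfies $\|v\| \leq C \lambda^{Nk}$, where $\lambda > 1$ is the spectral radius of $\phi$; this gives $d^{\Inn}_A(0, v) \gtrsim \log \|v\|$. For the matching upper bound $d^{\Inn}_A(0, v) \lesssim \log \|v\|$, I would use the classical exponential distortion of the normal $\ZZ^2$ inside Sol: decomposing $v$ along the expanding and contracting eigenlines of $\phi$ and conjugating a bounded translation by an appropriate power of $S_0$ reaches $v$ using $O(\log \|v\|)$ generators. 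Consequently $B_{d^{\Inn}_A}(0, R)$ contains every lattice point of Euclidean norm at most $e^{cR}$, yielding exponential growth.

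Polynomial growth and exponential growth cannot be related by a quasi-isometry, so $(O, d^{\Inn}_A)$ and $(O, d^{\Dis}_U)$ are not quasi-isometric for these choices, and \cref{quandle qi class} upgrades the conclusion to arbitrary finite generating sets. The main technical obstacle is the logarithmic upper bound $d^{\Inn}_A(0, v) = O(\log \|v\|)$, which relies on the Sol-geometric shortcut and requires a short case analysis depending on whether $v$ has large expanding or contracting component; the remaining steps are routine bookkeeping with the semidirect product structure of $\Inn(X)$.
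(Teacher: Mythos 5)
Your argument is correct, but it takes a genuinely different route from the paper. The paper uses the infinite dihedral quandle $R_\infty = (\ZZ, \qop)$ with $x \qop y = 2y-x$ (which is $\GAlex(\ZZ,-\mathrm{id})$ up to the component structure) and distinguishes the two metrics on $O_{\mathrm{even}}$ by the \emph{number of ends}: the inner graph on $O_{\mathrm{even}}$ is isometric to a ray $\ZZ_{\geq 0}$ (one end, via an explicit folding map), while the displacement graph is isometric to the line $\ZZ$ (two ends). This is a two-line computation requiring no distortion estimates. Your example $\GAlex(\ZZ^2,\phi)$ with $\phi$ hyperbolic instead distinguishes the metrics by \emph{volume growth} (degree-$2$ polynomial versus exponential), and your identification of $\Dis(X)$ with the full translation lattice and of $\Inn(X)$ with $\ZZ^2\rtimes_\phi\ZZ$ checks out (the paper's own computation in \cref{sec:examples} confirms $\Dis(\GAlex(\ZZ^n,t))\cong(1-t^{-1})\ZZ^n$, which is all of $\ZZ^2$ here since $\det(I-\phi^{-1})=-1$). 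The price you pay is the upper bound $\dInn_A(0,v)=O(\log\|v\|)$, i.e.\ the classical exponential distortion of the normal $\ZZ^2$ in Sol; this is standard but is real work compared with the paper's elementary ends computation. What your approach buys is a quantitatively more dramatic separation of the two quasi-isometry classes, and a connected example ($X$ is a single component), whereas $R_\infty$ has two components. Both invariants are legitimate quasi-isometry invariants, and your reduction to a single pair of generating sets via \cref{quandle qi class} is exactly the reduction the paper uses.
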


Next, we consider the special case where the displacement group acts freely on a connected component.
In this situation, the displacement metric on the connected component is identified with the word metric of the displacement group.
We note that the inner automorphism group cannot act freely due to the first axiom of quandles.

\begin{thm}[\cref{quandle-theoretic Milnor-Svarz}]\label{free action}
  Let $X$ be a quandle with a connected component $O$ that the displacement group $\Dis(X)$ freely acts on.
  If the group $\Dis(X)$ is finitely generated, then the metric space $O$ with a displacement  metric is quasi-isometric to the metric space $\Dis(X)$ with a word metric.
\end{thm}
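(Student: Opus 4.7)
The plan is to establish an even stronger statement than claimed: for any finite generating set $U$ of $\Dis(X)$, the metric space $(O, \dDis_U)$ is in fact isometric (not merely quasi-isometric) to the Cayley graph of $\Dis(X)$ with respect to $U$, equipped with the word metric $d_U$. The idea is that a free transitive action trivializes the Schreier graph construction to the Cayley graph.

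First I would fix a basepoint $x_0 \in O$ and consider the orbit map
\[
  \phi \colon \Dis(X) \to O, \qquad \phi(g) = x_0 \cdot g.
\]
Since $O$ is a connected component of the displacement graph, it is precisely a $\Dis(X)$-orbit, so $\phi$ is surjective. The freeness hypothesis gives injectivity, so $\phi$ is a bijection. (If instead ``connected component'' refers to an $\Inn(X)$-orbit, one uses the standard fact that $\Dis(X)$- and $\Inn(X)$-orbits coincide on a quandle, via the relation $y \cdot s_x = (y \cdot s_y) \cdot s_x = y \cdot (s_y^{-1} s_x) \cdot s_y$ combined with $y \cdot s_y = y$, showing $y * x$ lies in the $\Dis(X)$-orbit of $y$.)

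Next I would verify that $\phi$ is a graph isomorphism from the Cayley graph $\mathrm{Cay}(\Dis(X), U)$ to the subgraph of $\SchgraphDis_U$ induced on $O$. An edge of the Cayley graph joins $g$ and $gu$ for some $u \in U$, and
\[
  \{\phi(g), \phi(gu)\} = \{x_0 \cdot g, \,(x_0 \cdot g) \cdot u\}
\]
is by definition an edge of $\SchgraphDis_U$. Conversely, every edge of $\SchgraphDis_U$ incident to a vertex of $O$ stays inside $O$ (because $O$ is $\Dis(X)$-stable) and arises in this way. The path metric transfers through any graph isomorphism, so $\phi$ is an isometry $(\Dis(X), d_U) \to (O, \dDis_U)$, and in particular a quasi-isometry.

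I do not expect a genuine obstacle here; the argument is essentially a bookkeeping exercise once the bijection $\phi$ is in hand. The only thing requiring care is reconciling the right-action convention used in the paper with the conventions for the word metric, so that the edges $\{g, gu\}$ really correspond to the Schreier edges $\{x, x \cdot u\}$ under $\phi$. This is the quandle-theoretic analogue of the trivial case of the Milnor--\v{S}varc lemma in which the action is free, so one obtains an isometry rather than only a quasi-isometry.
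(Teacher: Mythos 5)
Your proposal is correct and matches the paper's argument in essence: the paper likewise fixes a basepoint $p \in O$ and shows the orbit map $g \mapsto p \cdot g$ is an isometry from $(\Dis(X), d_U)$ to $(O, \dDis_U)$, using transitivity of $\Dis(X)$ on $O$ for surjectivity and freeness to force equality of word lengths. Your packaging of this as a graph isomorphism between the Cayley graph and the component of the Schreier graph is only a cosmetic rephrasing of the same computation.
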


\cref{free action} reminds us Milnor--\v{S}varc lemma (for instance, see \cite{Loh-2017-GeometricGroupTheoryIntroduction}).
In fact, both \cref{free action} and Milnor--\v{S}varc Lemma give a quasi-isometry from a group to a quandle and a metric space, respectively, by using a similar map.
In  geometric group theory, Milnor--\v{S}varc Lemma provides many examples of quasi-isometries between groups and typical metric spaces.
Analogously, \cref{free action} provides several examples of quandles whose connected components are quasi-isometric to typical metric spaces.

Next, we consider quandles that can apply \cref{free action}, that is, their displacement groups act freely on a connected component of them.
We show that such quandles are essentially \emph{generalized Alexander quandles} (see \cref{quandle with transitive action}).
This type of quandle is a group equipped with a quandle structure given by a group automorphism, which is studied in detail in \cite{Higashitani-2024-GeneralizedAlexanderQuandlesFinite,Higashitani-2024-ClassificationGeneralizedAlexanderQuandles}.
It is important in quandle theory, for example, every homogeneous quandle is presented as a quotient of a generalized Alexander quandle.
In particular, it is known that any group object in the category of quandles is isomorphic to a generalized Alexander quandle.
By \cref{free action}, 
the displacement metric on any connected component of a generalized Alexander quandle is rephrased to the word metric of the displacement group.
\begin{thm}[\cref{thm:qi_GAlex}]\label{intro:qi_GAlex}
  Let $G$ be a group and let $\sigma$ be its group automorphism.
  If the displacement group of the generalized Alexander quandle $X \coloneqq \GAlex(G, \sigma)$ is finitely generated, then any connected component $O$ of the quandle with a displacement metric is quasi-isometric to the displacement group with a word metric. 
\end{thm}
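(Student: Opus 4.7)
The strategy is to invoke \cref{free action} (the quandle-theoretic Milnor--\v{S}varc lemma): it suffices to verify its hypothesis, namely that the displacement group $\Dis(X)$ of $X = \GAlex(G,\sigma)$ acts freely on each connected component of $X$. Once this is done, the finite generation assumption on $\Dis(X)$ produces the quasi-isometry immediately.

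To establish the free action, I would work on the underlying set $G$ of $X$. Writing out the generalized Alexander operation and computing $S_y \circ S_z^{-1}$ for generating inner automorphisms $S_y, S_z$, a routine calculation shows that this composition acts on $G$ by right translation by an element of the form
\[
  z^{-1}\sigma(z)\sigma(y)^{-1}y \;=\; \bigl(\sigma(z)^{-1}z\bigr)^{-1}\bigl(\sigma(y)^{-1}y\bigr).
\]
Iterating, every element of $\Dis(X)$ acts on $G$ by right translation by some element of the subgroup
\[
  H := \langle \sigma(g)^{-1} g \mid g \in G \rangle \leq G.
\]
The evaluation map $\varphi \mapsto \varphi(e_G)$ then yields a bijection $\Dis(X) \to H$: surjectivity is immediate from the calculation, and injectivity holds because right translation on $G$ by $h \in G$ is the identity map only when $h = e_G$. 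Consequently the action of $\Dis(X)$ on $G$ is identified with right multiplication by $H$, which is patently free; restricted to any connected component $O \subseteq X$ (which is a right coset of $H$ in $G$), the action remains free.

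With the free-action hypothesis verified and $\Dis(X)$ finitely generated by assumption, \cref{free action} applies directly and yields the claimed quasi-isometry between $(O, \dDis_U)$ and $\Dis(X)$ equipped with any word metric. The only substantive step is the identification of $\Dis(X)$ with the subgroup $H \leq G$ acting by right translations; the sole subtlety is keeping track of the side on which the action is performed (i.e.\ whether one obtains a homomorphism to $G$ or to $G^{\mathrm{op}}$), which is bookkeeping rather than a real obstacle. Once the identification is in place, the theorem is a clean corollary of the Milnor--\v{S}varc-type statement proved above.
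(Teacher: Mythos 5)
Your proposal is correct and follows essentially the same route as the paper: the paper likewise reduces to the quandle-theoretic Milnor--\v{S}varc lemma after identifying $\Dis(X)$ with a subgroup of $G$ acting by right translations (its proposition that $P \cong \Dis(X)$ via $x \mapsto R_x$ contains exactly your computation, and your $H$ is precisely $P$, the connected component of the identity). The only cosmetic difference is that the paper first invokes homogeneity to reduce to the identity component before citing that isomorphism, whereas you verify freeness on all of $G$ at once, which makes that reduction unnecessary.
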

Finally, we give examples of quandles whose connected component with the metrics defined in this paper are quasi-isometric to typical metric spaces, the trees, the Euclidean spaces, the hyperbolic plane, and some $3$-dimensional homogeneous spaces.

This paper is organized as follows:
in \cref{sec:preliminaries}, we review the notion of quandles and quasi-isometries.
In particular, we introduce the Schreier graphs and prove that the graphs determine quasi-isometry classes of metric spaces on the set in a general situation.
In \cref{sec:metric_qdle}, we study the Schreier graphs with respect to the natural actions given by the quandle structure.
We show some fundamental properties for such graphs and metrics.
Moreover, we give an example of a quandle to see the difference between quandles with inner and displacement metrics.
In \cref{sec:generalized Alexander quandle}, we focus on the generalized Alexander quandles and determine the quasi-isometry classes for each connected component of these quandles with displacement metrics.
In \cref{sec:examples}, by using the results in \cref{sec:generalized Alexander quandle}, we end this paper with some examples of quandles quasi-isometric to typical metric spaces:
trees, the Euclidean spaces, the hyperbolic plane, and the $3$-dimensional homogeneous spaces.

% ------------------------------------------------------------------------------
\section{Preliminaries}\label{sec:preliminaries}
%## Subsection 2-1 Quandle
\subsection{Quandles}\label{subsec:qdle}
In this subsection, we review some notions of quandles and properties related to group actions on quandles.
The following definitions were originally given by Joyce \cite{Joyce-1982-ClassifyingInvariantKnotsKnot}.
First, let us recall the definition of quandles.
%- Def: the definition of quandles
\begin{dfn}\label{def:qdle}
    A non-empty set $X$ equipped with a binary operation $\qop$ is called a \emph{quandle} if the following conditions hold:
    \begin{enumerate}
        \item $x \qop x = x$ holds for any $x \in X$.
        \item The map $s_y \colon X \to X$ defined by $s_y(x) \coloneqq x \qop y$ is a bijection for any $y \in X$.
        \item $(x \qop y) \qop z = (x \qop z) \qop (y \qop z)$ holds for any $x, y, z \in X$.
    \end{enumerate}
    The bijection $s_y \colon X \rightarrow X$ is called the \emph{point symmetry} at $y$. 
    We denote $x \qop^{-1} y \coloneqq s_y^{-1}(x)$.
\end{dfn}

%- subquandle, quandle homomorphism, inner automorphism, connectedness
A subset of $X$ is called a \emph{subquandle} if it is closed under both the binary operation $\qop$ and its inverse $\qop^{-1}$.
Let $(X, \qop)$ and $(Y, \qop^{\prime})$ be quandles. 
A map $f \colon X \to Y$ is called a \emph{quandle homomorphism} if it satisfies $f(x \qop x^{\prime}) = f(x) \qop^{\prime} f(x^{\prime})$ for any elements $x, x^{\prime} \in X$. 
A bijective quandle homomorphism $f \colon X \to Y$ is called a \emph{quandle isomorphism}.
Two quandles $X$ and $Y$ are said to be \emph{isomorphic}
if there exists a quandle isomorphism from $X$ to $Y$.
The set of all quandle isomorphisms from $X$ to itself is denoted by $\Aut(X)$.
This set with the binary operation $fg \coloneqq g \circ f$ forms a group, and is called the \emph{automorphism group}.
The group acts on the quandle $X$ from the right by $x \cdot f \coloneqq f(x)$.
A quandle $X$ is said to be \emph{homogeneous} if $\Aut(X)$ acts transitively on $X$.
Note that any point symmetry $s_y \colon X \to X$ is a quandle isomorphism by the second and third axioms of quandles.
The set of point symmetries generates a subgroup of $\Aut(X)$, which plays an important role in this paper. 
In addition, the connectedness of a quandle is defined by the action of this group.
\begin{dfn}\label{def:qdle_aut_grp_hmg_qdle}
    The \emph{inner automorphism group} of $X$ is a subgroup in $\Aut(X)$ generated by $\{ s_y \mid y \in X \}$, and is denoted by $\Inn(X)$.
    A \emph{connected component} of $X$ is an orbit under the action of $\Inn(X)$.
    The set of all connected components of $X$ is denoted by $\pi_{0}(X) = X / \Inn(X)$.
    A quandle $X$ is said to be \emph{connected} if $\Inn(X)$ acts transitively on $X$.
\end{dfn}

%- Def: displacement group
The following group was defined by Joyce \cite{Joyce-1982-ClassifyingInvariantKnotsKnot} as the transvection group. 
This group also plays a central role in this paper.
\begin{dfn}\label{def:displacement_grp}
    The \emph{displacement group} of a quandle $X$ is 
    the subgroup of $\Aut(X)$ generated by the set $\{s_x s_y^{-1} \mid x, y \in X\}$,
    and is denoted by $\Dis(X)$.
\end{dfn}
The following are some properties of these actions of groups.
%- Prop: Properties of Dis [Hulpke et al.]
\begin{prop}[{\cite[Proposition 2.1]{Hulpke-2016-ConnectedQuandlesTransitiveGroups}}]\label{prop:properties_displ_grp}
    Let $X$ be a quandle.
    Then the groups $\Inn(X)$ and $\Dis(X)$ satisfy the following properties:
    \begin{enumerate}[font=\normalfont]
        \item The groups $\Inn(X)$ and $\Dis(X)$ are normal subgroups of $\Aut(X)$ 
        \item The group $\Dis(X)$ is a normal subgroup of the group $\Inn(X)$, and the quotient group $\Inn(X) / \Dis(X)$ is cyclic. 
        \item The following equality holds:
        \begin{align*}
            \Dis(X) = \left\{ s_{a_1}^{k_1} \cdots s_{a_n}^{k_n} \mid n \geq 0,\; a_i \in X,\; \sum_{i=1}^n k_i = 0 \right\}.
        \end{align*}
        \item The actions of $\Inn(X)$ and $\Dis(X)$ on $X$ have the same orbits.
    \end{enumerate}
\end{prop}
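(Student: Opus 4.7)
The plan is to dispatch the four claims in the order (1), joint (2)--(3), then (4), resting on a single nontrivial computation. That computation, underlying (1), is the conjugation formula $f^{-1} s_y f = s_{f(y)}$ for $f \in \Aut(X)$ and $y \in X$, which I would verify directly from the hypothesis that $f$ is a quandle homomorphism together with the definition of $s_y$. Granted this, conjugation by any $f \in \Aut(X)$ permutes the generators $\{s_y : y \in X\}$ of $\Inn(X)$ and sends a generator $s_a s_b^{-1}$ of $\Dis(X)$ to $s_{f(a)} s_{f(b)}^{-1}$, so both subgroups are normal in $\Aut(X)$.

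For (2), normality of $\Dis(X)$ in $\Inn(X)$ is immediate from (1), so it remains to establish cyclicity of the quotient. The key observation is that every defining generator $s_x s_y^{-1}$ of $\Dis(X)$ already asserts $s_x \equiv s_y \pmod{\Dis(X)}$, so in the quotient $\Inn(X)/\Dis(X)$ all point symmetries collapse to a single class $[s]$; since the $s_y$ generate $\Inn(X)$, the class $[s]$ generates $\Inn(X)/\Dis(X)$. With this structure in hand, (3) is straightforward: the inclusion $\supseteq$ follows because a product $s_{a_1}^{k_1} \cdots s_{a_n}^{k_n}$ with $\sum_i k_i = 0$ maps to $[s]^0 = 1$ in the cyclic quotient and therefore lies in $\Dis(X)$; the inclusion $\subseteq$ follows because the generators $s_a s_b^{-1}$ of $\Dis(X)$ have exponent-sum $0$, and this property is preserved under products and inverses.

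For (4), the inclusion of $\Dis(X)$-orbits in $\Inn(X)$-orbits is immediate from $\Dis(X) \leq \Inn(X)$. For the converse, given $z$ in the $\Inn(X)$-orbit of $x$ I would write $z = x \cdot g$ with $g = s_{a_1}^{k_1} \cdots s_{a_n}^{k_n}$, set $k := \sum_i k_i$, and consider $h := g \cdot s_z^{-k}$. The first quandle axiom $z \qop z = z$ gives $s_z(z) = z$, whence $x \cdot h = z$; meanwhile the evident representation of $h$ has exponent-sum $0$, so $h \in \Dis(X)$ by (3).

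The main subtlety I expect is in (3). A naive attempt to define an exponent-sum homomorphism $\Inn(X) \to \ZZ$ on words can fail, because distinct words in the $s_y$ may represent the same automorphism with different exponent sums, as already occurs in the trivial quandle where every $s_y$ is the identity. The approach above sidesteps this by working entirely inside the quotient $\Inn(X)/\Dis(X)$, where any such ambiguity is absorbed by the cyclic structure; this is the step that requires the most care, while the remaining verifications are short.
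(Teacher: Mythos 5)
Your proof is correct. The paper gives no proof of this proposition at all --- it is quoted from Hulpke--Stanovsk\'y--Vojt\v{e}chovsk\'y --- and your argument is the standard one: the conjugation identity $f^{-1}s_yf = s_{f(y)}$ (which the paper verifies and uses elsewhere, e.g.\ in the proof of \cref{cor:all_conn_cmpts_homqdle_qi_inn}) gives (1) and (2), the two-sided inclusion via the set of elements admitting an exponent-sum-zero representation gives (3), and appending $s_z^{-k}$ gives (4). You also correctly identify and defuse the one real subtlety, namely that the exponent sum is not a well-defined function on $\Inn(X)$, by treating the right-hand side of (3) as the set of elements admitting \emph{some} zero-sum word and checking that this set is a subgroup containing the generators $s_as_b^{-1}$.
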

By \cref{prop:properties_displ_grp}, each connected component of a quandle is equal to an orbit under the action of $\Dis(X)$.

We end this subsection with some examples of quandles. 
The first example is one of the simplest infinite quandles. 
This quandle gives a difference of metrics introduced in \cref{sec:metric_qdle}.
%- Ex: infinite dihedral quandle
\begin{ex}\label{ex:infty_dihedral_qdle}
Let $\ZZ$ be the set of all integers.
We define a binary operation $\qop$ on $\ZZ$ by 
\begin{align*}
    x \qop y \coloneqq 2y - x \qquad \text{for all } x, y \in \mathbb{Z}.
\end{align*}
It defines a quandle structure on $\ZZ$.
This quandle is called the \emph{infinite dihedral quandle}, and is denoted by $R_\infty$.
Moreover, the following hold:
\begin{enumerate}
    \item The point symmetries at the points $0, 1 \in R_{\infty}$ satisfy that
    \begin{align*}
        x \cdot s_{0} = -x, \qquad x \cdot s_{1} = 2 - x, \qquad x \cdot (s_{1}s_{0}^{-1}) = x - 2.
    \end{align*}
    \item The set $A = \{s_{0}, s_{1}\}$ generates the group $\Inn(R_{\infty})$.
    \item The set $U = \{s_{1}s_{0}^{-1}\}$ generates the group $\Dis(R_{\infty})$.
    \item $\pi_{0}(R_{\infty}) = \{ O_{\mathrm{even}},\, O_{\mathrm{odd}}\}$, 
    where $O_{\mathrm{even}} \coloneqq 2\mathbb{Z}, \quad O_{\mathrm{odd}} \coloneqq 2\mathbb{Z} + 1$.
    \item The quandle $R_{\infty}$ is homogeneous.
\end{enumerate}
\end{ex}

\begin{proof}
    One can easily check (1) by a direct calculation. 
    Here, any $z \in R_{\infty}$ satisfies that
    \begin{align*}
        s_z = (s_1s_0^{-1})^z s_0. 
    \end{align*}
    This shows that the set $A$ is a generating set of $\Inn(X)$, thus we obtain (2).
    Moreover, a generator $s_{x}s_{y}^{-1}$ of the displacement group for $x, y \in R_{\infty}$ is given by
    \begin{align*}
         z \cdot (s_x s_y^{-1}) = (2x - z) \cdot s_{y}^{-1} = 2y - (2x - z) = z - 2(x-y) = z \cdot (s_{1}s_{0}^{-1})^{x - y},
    \end{align*}
    for any $z \in R_{\infty}$.
    Therefore the set $U$ generates the group $\Dis(R_{\infty})$, and we have (3).
    We now show that the connected component including $0$ is equal to $O_{\mathrm{even}}$, that is, the equation $0 \cdot \Inn(R_{\infty}) = O_{\mathrm{even}}$ holds.
    Since generators $s_0$ and $s_1$ of $\Inn(R_{\infty})$ preserve the set $O_{\mathrm{even}}$, and the element $0$ is in $O_{\mathrm{even}}$, we have $0 \cdot \Inn(R_{\infty}) \subset O_{\mathrm{even}}$. 
    Conversely, any element $2n \in O_{\mathrm{even}}$ is in the orbit of $0$ as $2n = 0 \cdot (s_{1}s_{0}^{-1})^{-n}$. 
    Hence, the assertion holds.
    By the same argument, one can show that the connected component including $1$ is equal to $O_{\mathrm{odd}}$.
    This shows (4).
    For an integer $n \in R_{\infty}$, the map $\alpha \colon R_{\infty} \to R_{\infty}$ defined by $\alpha(z) = z + n$ is a quandle isomorphism and satisfies $\alpha(0) = n$.
    Hence, the quandle $R_{\infty}$ is homogeneous, which completes the proof.
\end{proof}

%- Ex: conjugation quandle
Finally, we give a general class of quandles that gives many examples from groups.
The binary operation of quandles can be regarded as a generalization of the conjugation of groups.
In fact, the operation defines a quandle structure on a subset of a group.
\begin{ex}\label{eq:conj_qdle}
    Let $G$ be a group, and let $X \subseteq G$ be a nonempty subset that is closed under conjugation. 
    More precisely, the element $g^{-1} x g$ is in $X$ for any $x \in X$ and $g \in G$.
    Then, the set $X$ is a quandle equipped with a binary operation $\qop$ defined by
    \begin{align*} 
        x \qop y \coloneqq y^{-1} x y, \qquad x, y \in X, 
    \end{align*} 
    and is called the \emph{conjugation quandle}.
\end{ex}

% ------------------------------------------------------------------------------
%## Subsection 2-2 Schreier graph
\subsection{Schreier graphs}\label{subsec:SchGraph}

In this subsection, we introduce the notion of Schreier graphs.
This graph is regarded as a generalization of the Cayley graph and will be used to define some metrics for quandles. 
First, we give the definition of the Schreier graph, and then we show that the quasi-isometry class of the metric induced from the graph structure on each component is uniquely determined up to the choice of the finite generating sets of the group acting on it.
%- Def: Schreier graph
\begin{dfn}\label{def:schreier_graph} 
    Let $X$ be a nonempty set equipped with a right action of a group $G$, and let $S \subset G$ be a generating set. 
    The \emph{Schreier graph} of the right action of $G$ with respect to $S$ is the undirected graph $\Schgraph(X, G, S)$ 
    which is defined as follows:
    \begin{enumerate}[font=\normalfont]
        \item The set of vertices is $X$.
        \item Two vertices $x$ and $y$ are connected by an edge if it satisfies that $y = x \cdot s$ for some $s \in S \cup S^{-1}$, where $S^{-1}\coloneqq \{s^{-1} \mid s \in S\}$. 
        Then the edge is labeled by $s$. 
    \end{enumerate}
\end{dfn}

%- rmk: correspondence with Cayley graph
\begin{rem}
The Cayley graph of a finitely generated group $G$ can be regarded as a special case of the Schreier graph. 
In fact, the Schreier graph of the natural right action of $G$ on the set $G$ defined by $g \cdot h = gh$ for $g, h \in G$ is the Cayley graph with a certain generating set.
\end{rem}

The connected components of the Schreier graph coincide precisely with the orbits of the group action, as shown below.
One may easily prove this, 
but we give a proof for readers.
%- Prop: connected component of Shreier graph 
\begin{prop}\label{prop:conn_cmpts_sch_graph}
    Let $X$ be a nonempty set equipped with a right action of a group $G$ with a generating set $S$. 
    Then there exists a bijection from the set $\mathcal{O}$ of connected components of the Schreier graph $\Schgraph(X, G, S)$ to the set $X / G$ of $G$-orbits in $X$.
\end{prop}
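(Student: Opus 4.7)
The plan is to define the bijection explicitly as the map $\Phi\colon \mathcal{O} \to X/G$ sending a connected component $C$ of $\Schgraph(X, G, S)$ to the $G$-orbit $x \cdot G$, where $x$ is any vertex in $C$. The argument then reduces to checking three routine properties: well-definedness, surjectivity, and injectivity, each of which uses the fact that $S$ generates $G$ together with the edge definition from \cref{def:schreier_graph}.

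For well-definedness, I would take two vertices $x, y \in C$ and use the existence of a path $x = x_0, x_1, \ldots, x_n = y$ in $\Schgraph(X, G, S)$. By definition of the edges, each consecutive pair satisfies $x_{i+1} = x_i \cdot s_i$ for some $s_i \in S \cup S^{-1}$, so $y = x \cdot (s_0 s_1 \cdots s_{n-1})$ lies in $x \cdot G$, giving $x \cdot G = y \cdot G$. Surjectivity is immediate: any orbit $x \cdot G$ is the image of the connected component containing $x$, which exists since $x$ is a vertex.

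The key step is injectivity. Suppose two components $C_1, C_2$ satisfy $\Phi(C_1) = \Phi(C_2)$, and pick $x \in C_1$, $y \in C_2$. Then $y = x \cdot g$ for some $g \in G$, and since $S$ generates $G$ we can write $g = s_1^{\varepsilon_1} s_2^{\varepsilon_2} \cdots s_n^{\varepsilon_n}$ with $s_i \in S$ and $\varepsilon_i \in \{+1, -1\}$. Setting $x_0 \coloneqq x$ and $x_i \coloneqq x_{i-1} \cdot s_i^{\varepsilon_i}$ yields a finite sequence of vertices in which consecutive pairs are joined by edges of $\Schgraph(X, G, S)$, so $x$ and $y = x_n$ lie in the same connected component, forcing $C_1 = C_2$.

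I do not expect any genuine obstacle here; the statement is essentially a direct translation of the definitions, and the only subtlety is being explicit about how an arbitrary group element is expanded as a word in $S \cup S^{-1}$ so as to build an actual edge-path in the graph. The only care needed is to phrase the edge condition symmetrically, noting that if $y = x \cdot s$ then $x = y \cdot s^{-1}$, so that the graph being undirected is consistent with traversing both positive and negative exponents of generators.
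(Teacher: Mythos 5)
Your proposal is correct and follows essentially the same route as the paper's proof: define $\Phi$ on components via a representative vertex, verify well-definedness by composing edge labels along a path into a group element, and verify injectivity and surjectivity by expanding a group element as a word in $S \cup S^{-1}$ to build an edge-path. No gaps.
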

\begin{proof}
To begin the proof, we define the map
\begin{align*} 
    \Phi \colon \mathcal{O} \to X/G,\quad \mathcal{O}_x \mapsto O_x, 
\end{align*}
where $\mathcal{O}_x$ is the connected component of $\Schgraph(X, G, S)$ containing $x$, and $O_x = x \cdot G$ is the $G$-orbit of $x$.
Firstly, we show that $\Phi$ is well-defined.
Suppose that $y \in \mathcal{O}_x$.
By the definition of connected components in the Schreier graph, there exists a finite path connecting $x$ and $y$
\begin{align*}
x = x_0,x_1, \cdots, x_n = y,
\end{align*}
where each edge corresponds to a generator $s_k \in S$ and a sign $\varepsilon_k \in \{\pm 1\}$ such that
\begin{align*}
    x_{k+1} = x_k \cdot s_k^{\varepsilon_k}, \quad 0 \leq k \leq n-1.
\end{align*}
This path corresponds to the group element $g \in G$ which is described as
\begin{align*}
    g = s_0^{\varepsilon_0} s_1^{\varepsilon_1} \cdots s_{n-1}^{\varepsilon_{n-1}}, \quad s_k \in S, \quad \varepsilon_k \in \{\pm 1\},
\end{align*}
and we have $y = x \cdot g$.
Thus, $x$ and $y$ belong to the same orbit.
Therefore, $\Phi$ is well-defined.

Secondly, we prove that $\Phi$ is injective.
We assume that $\Phi(\mathcal{O}_x) = \Phi(\mathcal{O}_y)$.
So we have $O_x = O_y$ by the definition of $\Phi$ and, there exists an element $g \in G$ such that $y = x \cdot g$.
Since $S$ is a generating set of $G$, the group element $g$ can be written as
\begin{align*}
    g = s_0^{\varepsilon_0}s_1^{\varepsilon_1} \cdots s_{n-1}^{\varepsilon_{n-1}}, \quad \text{with } s_i \in S,\ \varepsilon_i \in \{\pm 1\}.
\end{align*}
We define a sequence of vertices in $X$ by
\begin{align*}
    x_0 \coloneqq x, \quad x_{k+1} = x_k \cdot s_k^{\varepsilon_k}, \quad 0 \leq k \leq n-1.
\end{align*}
Then, by construction, we have $x_n = y$, and each pair $(x_{k}, x_{k+1})$ is connected by an edge in the Schreier graph $\Schgraph(X, G, S)$.
Therefore, there exists a path from $x$ to $y$ in the Schreier graph, so $x$ and $y$ belong to the same connected component.
That is, $\mathcal{O}_x = \mathcal{O}_y$.
Hence, $\Phi(\mathcal{O}_x) = \Phi(\mathcal{O}_y)$ implies $\mathcal{O}_x = \mathcal{O}_y$,
which shows that $\Phi$ is injective.

Finally, we show that $\Phi$ is surjective.
Let $O_x$ be any $G$-orbit in $X$.
Any point $y \in O_x$ can be written as $y = x \cdot g$ for some $g \in G$, and since $S$ generates $G$, we have
\begin{align*}
    g = s_0^{\varepsilon_0}s_1^{\varepsilon_1} \cdots s_{n-1}^{\varepsilon_{n-1}}, \quad \text{with } s_i \in S,\ \varepsilon_i \in \{\pm 1\}.
\end{align*}
It follows that $x$ and $y$ lie in the same connected component of the Schreier graph $\Schgraph(X, G, S)$, so $y \in \mathcal{O}_x$.
Hence, $\Phi(\mathcal{O}_x) = O_x$, and $\Phi$ is surjective.
\end{proof}

%- metric on a connected component induced by Schreier graph
The graph structure of $\Schgraph(X, G, S)$ provides a metric on each connected component, in other words, each $G$-orbit, as the path metric.
More precisely, for two vertices $u$ and $v$ in the same component $O$, we define \begin{align*}
    \dSch_{S}(u, v) = \inf\{l(\gamma) \mid \gamma \colon \text{a path on $O$ connecting $u$ and $v$}\}
\end{align*}
where we assign the length of each edge as one, and denote the length of a path $\gamma$ by $l(\gamma)$.
%- rmk: depends on generating set
This metric depends on the choice of the generating set $S$.
To avoid this problem, we recall the definition of the quasi-isometry.
%- def: quasi-isometry
\begin{dfn}\label{dfn:qi_metric_sp}
    Let $(X_{1}, d_{X_1})$ and $(X_{2}, d_{X_{2}})$ be metric spaces. 
    \begin{enumerate}
        \item A map $f \colon X_{1} \rightarrow X_{2}$ is called \emph{quasi-isometric embedding} if there exist constants $\lambda \geq 1$ and $k \geq 0$ such that any $x, y \in X_{1}$ satisfy that
        \begin{equation*}
            \frac{1}{\lambda} d_{X_{1}}(x, y)-k \leq d_{X_{2}}(f(x), f(y)) \leq \lambda d_{X_{1}}(x, y) + k.
        \end{equation*}

        \item A map $f \colon X_{1} \rightarrow X_{2}$ has \emph{coarsely dense image} if there exists a constant $C \geq 0$ such that for every $y \in X_{2}$, there exists some $x \in X_{1}$ satisfying  $d_{X_{2}}(f(x), y) \leq C$.
        
        \item A map $f \colon X_{1} \rightarrow X_{2}$ is called a \emph{quasi-isometry} if it is a quasi-isometric embedding and has coarsely dense image. 
        
        \item Two metric spaces $(X_{1}, d_{X_{1}})$ and $(X_{2}, d_{X_{2}})$ are said to be \emph{quasi-isometric} if there exists a quasi-isometry between them.
    \end{enumerate}
\end{dfn}

If $\lambda=1$ and $k=0$, this is an isometry. 
The relation of being quasi-isometric defines an equivalence relation among metric spaces.
%- lem: indenpendency of a choice of finite generating set for Schreier graphs
We show that the quasi-isometry class of $\dSch_{S}$ is uniquely determined up to the choice of the generating sets $S$.
We believe this is a well-known fact, but give a proof for the convenience of the reader.
\begin{lem}\label{prop:qi_indep_ch_genset_sch_graph}
    Let $X$ be a nonempty set equipped with a right action of a finitely generated group $G$.
    If finite subsets $S$ and $T$ generate the group $G$,  
    then for any $G$-orbit $O$ in $X$, the identity map $\mathrm{id} \colon (O, \dSch_{S}) \to (O, \dSch_{T})$ is a quasi-isometry.
\end{lem}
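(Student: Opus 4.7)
The plan is to prove a stronger statement: the identity map is bi-Lipschitz, hence in particular a quasi-isometry with additive constant $k = 0$ and trivial coarsely dense image (since the map is the identity on $O$).

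First I would exploit that $G$ is generated by both $S$ and $T$ to translate words between the two alphabets. Since $T$ is finite and $T \subset G$ is generated by $S$, every $t \in T$ can be written as a product of elements of $S \cup S^{-1}$; let
\begin{align*}
    \lambda_1 \coloneqq \max_{t \in T} \|t\|_S,
\end{align*}
where $\|g\|_S$ denotes the minimal length of an expression for $g$ as a product of elements of $S \cup S^{-1}$. Symmetrically, define $\lambda_2 \coloneqq \max_{s \in S} \|s\|_T$, and set $\lambda \coloneqq \max(\lambda_1, \lambda_2) \geq 1$. Both constants are finite because $S$ and $T$ are finite.

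Next I would translate a geodesic in one Schreier graph into a (not necessarily geodesic) path in the other. Given $u, v \in O$ with $n \coloneqq \dSch_T(u, v)$, choose a path $u = x_0, x_1, \ldots, x_n = v$ in $\Schgraph(X, G, T)$ with each edge labeled by some $t_i \in T \cup T^{-1}$, so that $x_{i+1} = x_i \cdot t_i$. For each $t_i$, substitute an $S$-word of length at most $\lambda$ representing it; concatenating these substitutions produces a walk from $u$ to $v$ in $\Schgraph(X, G, S)$ of length at most $\lambda n$. Hence $\dSch_S(u, v) \leq \lambda \dSch_T(u, v)$, and the reverse inequality follows by the symmetric argument. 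This yields
\begin{align*}
    \frac{1}{\lambda} \dSch_S(u, v) \leq \dSch_T(u, v) \leq \lambda \dSch_S(u, v),
\end{align*}
which is the quasi-isometric embedding condition with constants $(\lambda, 0)$.

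I do not anticipate a serious obstacle: the substitution argument is routine once the word-length constants $\lambda_1, \lambda_2$ are in place, and the only subtle point is checking that the substituted walk remains inside the same orbit $O$, which is immediate because each intermediate vertex is obtained from $u$ by acting by an element of $G$ and so lies in $u \cdot G = O$. Coarsely dense image is automatic since the identity is surjective, completing the verification of quasi-isometry.
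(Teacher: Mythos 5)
Your proposal is correct and follows essentially the same route as the paper: bound the word length of each generator of one set in terms of the other, substitute letter by letter along a geodesic path to obtain the bi-Lipschitz estimate with multiplicative constant $\max(\lambda_1,\lambda_2)$, and note that the identity map trivially has coarsely dense image. The only cosmetic difference is that the paper includes $1$ in the maximum defining the Lipschitz constants to guard against degenerate cases, which you may wish to do as well.
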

\begin{proof}
By \cref{prop:conn_cmpts_sch_graph}, the $G$-orbit coincides with the vertex set of the connected component of the Schreier graph $\Schgraph(X, G, S)$. 
Let $|\cdot|_S$ and $|\cdot|_T$ denote the word lengths on $G$ with respect to $S$ and $T$, respectively. 
We Define:
\begin{align*}
    L_{S,T} \coloneqq \max\left( \{ |s|_T \mid s \in S \cup S^{-1} \} \cup \{1\} \right), \quad
    L_{T,S} \coloneqq \max\left( \{ |t|_S \mid t \in T \cup T^{-1} \} \cup \{1\} \right),
\end{align*}
which are finite real number and satisfy $L_{S,T} \ge 1$, $L_{T,S} \ge 1$.
Let $x, y \in O$, and assume $\dSch_S(x, y) = n$. 
Then there exist elements $s_1, \dots, s_n \in S \cup S^{-1}$ such that $y = x \cdot (s_1 s_2 \cdots s_n)$,
and $g \coloneqq s_1 s_2 \cdots s_n \in G$ satisfies $x \cdot g = y$.
Since each $s_i \in S \cup S^{-1}$ satisfies $|s_i|_T \le L_{S,T}$, we obtain
\begin{align*}
    |g|_T \le |s_1|_T + \cdots + |s_n|_T \le n  L_{S,T}.
\end{align*}
It follows that
\begin{align*}
    \dSch_T(x, y) \le |g|_T \le L_{S,T}  \dSch_S(x, y).
\end{align*}
By a similar argument, we also have
\begin{align*}
    \dSch_S(x, y) \le L_{T,S} \dSch_T(x, y).
\end{align*}
Let $L \coloneqq \max\{L_{S,T}, L_{T,S}\}$. 
Then for all $x, y \in O$, we have
\begin{align*}
    \frac{1}{L} \dSch_S(x, y) \le \dSch_T(x, y) \le L  \dSch_S(x, y),
\end{align*}
which shows that the identity map
\begin{align*}
    \mathrm{id}|_{O} \colon (O, \dSch_S) \longrightarrow (O, \dSch_T)
\end{align*}
is a quasi-isometry.
\end{proof}

%- def: # of end is quasi-isom. invariant
Finally, we introduce a quasi-isometry invariant, the number of ends.
This will be used to show that two metric spaces are not quasi-isometric in \cref{thm:fgqdle_sch_graph_inn_dis_notqi}.
Let $\Gamma$ be a connected, locally finite graph, and fix a basepoint $v_0 \in \Gamma$. 
We denote the ball of radius $n$ centered at $v_0$ by $B(n)$,
and the number of unbounded connected components of the set $C$ by $\|C\|$. 
\begin{dfn}\label{def:number_ends_graph}
\emph{The number of ends} of $\Gamma$ is defined as
\begin{align*}
    e(\Gamma) \coloneqq \lim_{n \to \infty} \|\Gamma \setminus B(n)\|.
\end{align*}
\end{dfn}
The sequence $\|\Gamma \setminus B(n)\|$ has a limit in $[0, \infty]$.
It is easy to see that this is independent of the choice of the basepoint.
In fact, this is a quasi-isometry invariant \cite{Loh-2017-GeometricGroupTheoryIntroduction}.
% ------------------------------------------------------------------------------
\section{The Schreier graphs of quandles}\label{sec:metric_qdle}
%# Section 3 Metrics of Quandles
In this section, we define graph structures for a quandle $X$ by using the framework of the Schreier graphs. 
We especially focus on the actions of the inner automorphism group $\Inn(X)$ and the displacement group $\Dis(X)$ since each connected component of the Schreier graphs of these actions corresponds to each connected component of a quandle (see \cref{prop:conn_cmpts_sch_graph_inn,prop:conn_cmpts_sch_graph_dis}).
%%%%%%%%%%%%%%%
The Schreier graph associated with the action of $\Inn(X)$ is a generalization of the diagram of quandles, which is also called the Cayley graph of quandles, introduced by Winker \cite{Winker-1984-QUANDLESKNOTINVARIANTSNFOLD}.
These graphs induce metrics on a quandle.
In particular, the quasi-isometry classes of the metric spaces are uniquely determined up to the choice of the generating sets if the corresponding group is finitely generated.
In particular, if the displacement group acts freely, then the metric associated with the action can be translated into its word metric.
If both of the groups $\Inn(X)$ and $\Dis(X)$ are finitely generated, then we have two quasi-isometry classes of metric spaces.
However, these metric spaces are not quasi-isometric in general.

\subsection{The case of inner automorphism groups}\label{subsec:SchInn}
%# Subsection 3-1 The Schreier graph of the action for the \Inn(X)
In this subsection, we focus on the action of the inner automorphism group on a quandle.
We define a graph structure for a quandle as the associated Schreier graph and then investigate several properties of this structure.
%- Def: metric on a quandle by Sch. graph wrt. action of inn
\begin{dfn}\label{def:sch_graph_qdle_inn_act}
  Let $X$ be a quandle, and let $A$ be a generating set of the group $\Inn(X)$.
  The Schreier graph $\Schgraph(X, \Inn(X), A)$ (see \cref{def:schreier_graph}) is called the \emph{inner graph of $X$ with respect to the generating set $A$}, and is denoted by $\SchgraphInn_A(X)$.
  The path metric on each connected component induced by $\SchgraphInn_A(X)$
  is called the \emph{inner metric}, and is denoted by $\dInn_A$.
\end{dfn}
This graph can be regarded as a generalization of the diagram of quandle defined by Winker \cite[Definition 4.3.9]{Winker-1984-QUANDLESKNOTINVARIANTSNFOLD}. 
The diagram is determined by the operation of a quandle from the right and a generating set of the quandle.
In fact, a generating set of a quandle $X$ gives a generating set of the group $\Inn(X)$
as \cref{prop:inn_fingen_qdle_fingen}. 
%- generating set of quandle
We recall that a subset $U \subset X$ is a \emph{generating set} of $X$ 
if any $x \in X$ can be presented as $x = (\cdots (u \qop^{\varepsilon_1} u_1) \qop^{\varepsilon_2} \cdots) \qop^{\varepsilon_n} u_n$ for some $u, u_1, \dots u_n \in U$ 
and $\varepsilon_1, \dots, \varepsilon_n \in \{\pm 1\}$.
In particular, a finite generating set of a quandle 
gives a finite generating set of the inner automorphism group.
%- Prop: Inn of finitely generated quandle is finitely generated
\begin{prop}\label{prop:inn_fingen_qdle_fingen}
Let $X$ be a quandle with a generating set $A$. 
Then the following hold:
\begin{enumerate}[font=\normalfont]
    \item The set $s(A) = \{ s_a \mid a \in A \}$ generates the inner automorphism group $\Inn(X)$.
    \item If $X$ is a finitely generated quandle, then $\Inn(X)$ is a finitely generated group.
\end{enumerate}
\end{prop}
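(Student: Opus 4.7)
The plan is to reduce both parts to a single structural observation: point symmetries behave equivariantly under themselves, so conjugating by a point symmetry sends a point symmetry to another point symmetry. Concretely, part (2) is immediate from part (1), since a finite generating set $A$ of $X$ yields a finite generating set $s(A)$ of $\Inn(X)$, so the substance lies entirely in (1).

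For (1), I would first record the conjugation identity
\begin{align*}
    s_{y \qop z} = s_z \circ s_y \circ s_z^{-1} \qquad \text{for all } y, z \in X.
\end{align*}
This is just a rewriting of the third quandle axiom: applying $s_z$ to $w \qop y$ gives $(w \qop y) \qop z = (w \qop z) \qop (y \qop z) = s_{y \qop z}(s_z(w))$ for every $w$, so $s_z \circ s_y = s_{y \qop z} \circ s_z$. The analogous identity $s_{y \qop^{-1} z} = s_z^{-1} \circ s_y \circ s_z$ follows by applying $s_z^{-1}$ to both sides, using that $s_z$ is a quandle isomorphism (axioms (2) and (3) of \cref{def:qdle}).

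Next I would show, by induction on the length $n$ of the expression, that for every $x \in X$ written as $x = (\cdots (u \qop^{\varepsilon_1} u_1) \cdots) \qop^{\varepsilon_n} u_n$ with $u, u_1, \dots, u_n \in A$ and $\varepsilon_i \in \{\pm 1\}$, the point symmetry $s_x$ lies in the subgroup $\langle s(A) \rangle$ of $\Inn(X)$ generated by $s(A)$. The base case $n = 0$ is immediate since then $x = u \in A$. For the inductive step, writing $x = y \qop^{\varepsilon_n} u_n$, the conjugation identity above gives $s_x = s_{u_n}^{\varepsilon_n} \circ s_y \circ s_{u_n}^{-\varepsilon_n}$, and the induction hypothesis places $s_y$ in $\langle s(A) \rangle$, so $s_x$ does as well. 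Since $A$ generates $X$, every $x \in X$ admits such an expression, so $\{s_x \mid x \in X\} \subset \langle s(A) \rangle$; as this set generates $\Inn(X)$ by definition, we conclude $\Inn(X) = \langle s(A) \rangle$, proving (1). Part (2) then follows at once: if $A$ is finite, so is $s(A)$.

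The only subtle point is the direction of conjugation, which depends on the right-action convention $fg = g \circ f$ used in the paper; I would take care to verify the identity $s_{y \qop z} = s_z \circ s_y \circ s_z^{-1}$ once at the outset from the axioms directly, rather than rely on any a priori intuition about how $\Inn(X)$ acts, and then let the induction run mechanically. There is no real obstacle beyond this bookkeeping.
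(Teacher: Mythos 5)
Your proposal is correct and follows essentially the same route as the paper: derive the conjugation identity $s_{y \qop z} = s_z \circ s_y \circ s_z^{-1}$ (equivalently $s_{u\qop v} = s_v^{-1}s_u s_v$ in the paper's multiplication convention $fg = g\circ f$) from the third quandle axiom, apply it repeatedly along a generating expression for $x$ to place $s_x$ in $\langle s(A)\rangle$, and deduce (2) immediately from (1). Your explicit induction and your care about the composition convention only make the paper's ``by repeatedly applying these relations'' step more precise.
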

\begin{proof}
First, we show (1).
It is enough to show that any point symmetry $s_x$ at $x \in X$ can be expressed as the product of finite elements in $s(A) \cup (s(A))^{-1}$. 
Since $A$ is a generating set, any $x \in X$ is presented as $x = (\cdots (a \qop^{\varepsilon_1} a_1) \qop^{\varepsilon_2} \cdots) \qop^{\varepsilon_n} a_n$ for some $a, a_1, \dots a_n \in A$ 
and $\varepsilon_1, \dots, \varepsilon_n \in \{\pm 1\}$, 
From the axioms of quandles, the point symmetries satisfy that
\begin{align*}
    s_{u \triangleleft v} = s_v^{-1} s_u s_v, \qquad s_{u \triangleleft^{-1} v} = s_v s_u s_v^{-1} \qquad (u, v \in X).
\end{align*}
By repeatedly applying these relations, the point symmetry $s_x$ can be expressed by
\begin{align*}
    s_x = s_{a_n}^{-\varepsilon_n} \cdots s_{a_1}^{-\varepsilon_1} s_a\, s_{a_1}^{\varepsilon_1} \cdots s_{a_n}^{\varepsilon_n}.
\end{align*}
Thus, we have (1).

If the quandle $X$ is finitely generated, then there exists a finite generating set $A = \{a_1, \dots, a_n\} \subset X$. 
Then, the set $s(A)$ is finite and generates the group $\Inn(X)$ by (1), which completes the proof.
\end{proof}
In this context, the connectedness of the graphs coincides with the connectedness of quandles.
%- Prop: connected commponent of \Gamma(X; Inn, A)
\begin{lem}\label{prop:conn_cmpts_sch_graph_inn}
    Let $X$ be a quandle and let $A$ be a generating set of the group $\Inn(X)$.
    Then the set of vertices in a connected component of the inner graph $\SchgraphInn_{A}(X)$ is a connected component of the quandle $X$, and the converse also holds.
\end{lem}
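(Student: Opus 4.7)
The plan is to deduce this lemma directly from \cref{prop:conn_cmpts_sch_graph} applied to the action of $G = \Inn(X)$ on the set $X$. By \cref{def:qdle_aut_grp_hmg_qdle}, the connected components of the quandle $X$ are, by definition, exactly the orbits of the right action of $\Inn(X)$ on $X$, i.e.\ the elements of $X / \Inn(X)$. On the other hand, the inner graph $\SchgraphInn_A(X)$ is by \cref{def:sch_graph_qdle_inn_act} the Schreier graph $\Schgraph(X, \Inn(X), A)$, so its connected components and the orbits of $\Inn(X)$ are in bijection via \cref{prop:conn_cmpts_sch_graph}.

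What I would then note is that the bijection $\Phi$ from the proof of \cref{prop:conn_cmpts_sch_graph} is not abstract: it sends the connected component $\mathcal{O}_x$ of the graph containing $x$ to the orbit $O_x = x \cdot \Inn(X)$. Thus the underlying vertex set of $\mathcal{O}_x$ coincides, as a subset of $X$, with the orbit $O_x$. To see this directly, I would write out both inclusions: if $y$ lies in the graph-component of $x$, then a finite edge-path from $x$ to $y$ yields a word $g \in \Inn(X)$ in the generators $A \cup A^{-1}$ with $y = x \cdot g$, so $y \in O_x$; conversely, given $y = x \cdot g$ with $g$ written as a word in $A \cup A^{-1}$, the partial products define a finite edge-path from $x$ to $y$ in $\SchgraphInn_A(X)$, showing $y \in \mathcal{O}_x$.

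Combining these identifications, the vertex set of any connected component of $\SchgraphInn_A(X)$ is a connected component of $X$ in the quandle sense, and every quandle-connected component of $X$ arises this way. I do not anticipate any real obstacle; the content is essentially a rephrasing of \cref{prop:conn_cmpts_sch_graph}, with the only subtlety being to observe that $\Phi$ realizes the identification at the level of underlying sets rather than merely as an abstract bijection.
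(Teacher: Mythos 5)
Your proposal is correct and follows essentially the same route as the paper: both deduce the statement by applying \cref{prop:conn_cmpts_sch_graph} to the action of $\Inn(X)$ on $X$ and then invoking the definition of a connected component of a quandle as an $\Inn(X)$-orbit. Your additional remark that the bijection $\Phi$ is an identification of underlying subsets of $X$ (not merely an abstract bijection) is a fair point of care, but it is already implicit in the paper's argument.
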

\begin{proof}
    Each connected component of a graph $\SchgraphInn_{A}(X)$ is an $\Inn(X)$-orbit by \cref{prop:conn_cmpts_sch_graph}.
    By definition, an $\Inn(X)$-orbit is a connected component of a quandle $X$, as desired.
\end{proof}
Note that even if the inner automorphism group $\Inn(X)$ is finitely generated, the quandle $X$ itself may not be so, for example, the infinite trivial quandle.

Here, we restate the result from \cref{subsec:SchGraph} in the case of the action of $\Inn(X)$.
%- Thm: indenpendency of a choice of finite generating set for \Gamma(X; Inn, A)
\begin{thm}\label{thm:qi_indep_ch_genset_sch_graph_inn}
    Let $X$ be a quandle, and let $O$ be its connected component. 
    If subsets $A$ and $B$ are finite generating sets of $\Inn(X)$, then the identity map 
    \begin{align*} 
        \mathrm{id}|_{O}\colon (O, \dInn_{A}) \to (O, \dInn_{B}) 
    \end{align*}
    is a quasi-isometry.
\end{thm}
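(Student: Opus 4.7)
The plan is to derive this theorem as an immediate specialization of the general Schreier graph result, \cref{prop:qi_indep_ch_genset_sch_graph}, applied to the right action of $\Inn(X)$ on $X$. The entire content of the statement lies in matching up the definitions; no new combinatorial or geometric input is needed beyond what is already proved.

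First I would unfold the definitions. By \cref{def:sch_graph_qdle_inn_act}, the inner graph $\SchgraphInn_A(X)$ is by construction the Schreier graph $\Schgraph(X, \Inn(X), A)$, and the inner metric $\dInn_A$ is precisely the induced path metric $\dSch_A$ on each connected component of that graph. By \cref{prop:conn_cmpts_sch_graph_inn}, which in turn rests on the bijection in \cref{prop:conn_cmpts_sch_graph}, the connected component $O$ of the quandle $X$ coincides with an $\Inn(X)$-orbit, and hence with the vertex set of a connected component of the Schreier graph, for any generating set of $\Inn(X)$. In particular, the underlying set on which we are comparing the two metrics $\dInn_A$ and $\dInn_B$ is the same $\Inn(X)$-orbit $O$.

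The hypotheses of \cref{prop:qi_indep_ch_genset_sch_graph} are then exactly the hypotheses of the present theorem: $X$ is a nonempty set equipped with a right action of the group $\Inn(X)$, and $A$ and $B$ are finite generating sets of $\Inn(X)$. Applying that lemma with $G = \Inn(X)$, $S = A$, $T = B$ directly yields that
\begin{align*}
    \mathrm{id}|_{O}\colon (O, \dSch_A) \longrightarrow (O, \dSch_B)
\end{align*}
is a quasi-isometry, which after the identification $\dInn_A = \dSch_A$ and $\dInn_B = \dSch_B$ is exactly the conclusion.

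I do not expect any real obstacle, since the proof is a bookkeeping exercise. The only point needing care is to make explicit the translation between the notation of \cref{subsec:SchGraph} and the quandle-specific notation introduced in \cref{def:sch_graph_qdle_inn_act}; once this dictionary is in place, the theorem is immediate from \cref{prop:qi_indep_ch_genset_sch_graph}.
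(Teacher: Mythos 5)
Your proposal is correct and matches the paper's proof exactly: the paper also derives the theorem as an immediate specialization of \cref{prop:qi_indep_ch_genset_sch_graph} to the right action of $\Inn(X)$ on $X$, with the identifications you describe. Your write-up merely spells out the definitional dictionary that the paper leaves implicit.
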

\begin{proof}
    It immediately follows from \cref{prop:qi_indep_ch_genset_sch_graph}.
\end{proof}
The above theorem allows us to introduce a well-defined notion of quasi-isometry between quandle components with respect to the action of $\Inn(X)$.
To discuss quasi-isometry of quandles with respect to the inner automorphism group, we introduce the following conventions.
Let $X$ and $Y$ be quandles whose inner automorphism groups $\Inn(X)$ and $\Inn(Y)$ are finitely generated.
A connected component $O$ of $X$ said to be \emph{quasi-isometric with respect to an inner metric} to a metric space $M$ if the metric space $(O, \dInn_A)$ is quasi-isometric to $M$ for some finite generating set $A$ of $\Inn(X)$.
Two quandles $X$ and $Y$ are \emph{quasi-isometric with respect to an inner metric} if there exist a map $f \colon X \to Y$ such that the induced map between the sets of connected components $\Bar{f} \colon \pi_{0}(X) \to \pi_{0}(Y)$ is a bijection and the restriction map $f|_{O} \colon O \to \Bar{f}(O)$ to any connected component $O$ of $X$ is a quasi-isometry with respect to inner metric.

The following result states that any two connected components of a homogeneous quandle are quasi-isometric to each other.
%- Cor: all connected components of homogenereous quandle are quasi-isometric (ver Inn)
\begin{cor}\label{cor:all_conn_cmpts_homqdle_qi_inn}
    Let $X$ be a quandle.
    Suppose that the inner automorphism group $\Inn(X)$ is finitely generated and $A \subset \Inn(X)$ is a finite generating set.
    If the quandle $X$ is homogeneous, then any two connected components $O$ and $O^{\prime}$ of $X$ with the inner metrics $\dInn_{A}$ are quasi-isometric.
\end{cor}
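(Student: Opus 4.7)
The plan is to use homogeneity to produce an automorphism $\varphi \in \Aut(X)$ that sends a chosen basepoint $x \in O$ to a chosen basepoint $x' \in O'$, and then show that $\varphi|_O \colon O \to O'$ is a quasi-isometry after absorbing the change of generating set induced by conjugation by $\varphi$.

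First, I would observe that $\varphi|_O$ is a bijection from $O$ onto $O'$. Since $\Inn(X)$ is normal in $\Aut(X)$ by \cref{prop:properties_displ_grp}(1), for each $g \in \Inn(X)$ the map $\varphi \circ g \circ \varphi^{-1}$ lies in $\Inn(X)$ as well; hence $\varphi$ permutes the $\Inn(X)$-orbits, and $\varphi(x) = x'$ forces $\varphi(O) = O'$. Writing the group operation on $\Aut(X)$ as $fg = g \circ f$ (so that $X \curvearrowleft \Aut(X)$ is the right action $x \cdot f = f(x)$), conjugation by $\varphi$ corresponds to the automorphism $c_\varphi(f) \coloneqq \varphi^{-1} f \varphi$ of $\Aut(X)$, which restricts to an automorphism of $\Inn(X)$. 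In particular, $A' \coloneqq c_\varphi(A) = \{\varphi^{-1} a \varphi \mid a \in A\}$ is a finite generating set of $\Inn(X)$.

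Next, I would verify that $\varphi|_O \colon (O, \dInn_A) \to (O', \dInn_{A'})$ is a graph isomorphism (hence an isometry). For $y = x \cdot a$ with $a \in A$, the computation
\begin{align*}
  \varphi(y) \;=\; \varphi(a(x)) \;=\; (\varphi \circ a \circ \varphi^{-1})(\varphi(x)) \;=\; \varphi(x) \cdot c_\varphi(a)
\end{align*}
shows that the $a$-labelled edge from $x$ to $y$ in $\SchgraphInn_A(X)$ is sent to the $c_\varphi(a)$-labelled edge from $\varphi(x)$ to $\varphi(y)$ in $\SchgraphInn_{A'}(X)$, and this correspondence is a bijection on edges by inverting $\varphi$. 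Hence the path metrics agree, and $\varphi|_O$ is an isometry onto $(O', \dInn_{A'})$.

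Finally, I would invoke \cref{thm:qi_indep_ch_genset_sch_graph_inn} applied to the two finite generating sets $A$ and $A'$ of $\Inn(X)$: the identity map $(O', \dInn_{A'}) \to (O', \dInn_A)$ is a quasi-isometry. Composing with $\varphi|_O$ yields a quasi-isometry $(O, \dInn_A) \to (O', \dInn_A)$. There is no real obstacle here; the only subtlety is tracking that right-actions together with the convention $fg = g \circ f$ make conjugation by $\varphi$ implement the correct change of generating set, which the identity above confirms.
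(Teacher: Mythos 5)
Your proposal is correct and follows essentially the same route as the paper: use homogeneity to get $\varphi$ with $\varphi(O)=O'$, check that $\varphi$ is an isometry from $(O,\dInn_A)$ onto $(O',\dInn_{A'})$ where $A'=\varphi^{-1}A\varphi$ is again a finite generating set of $\Inn(X)$, and then absorb the change of generating set via \cref{thm:qi_indep_ch_genset_sch_graph_inn}. The only cosmetic difference is that you justify $\varphi(O)=O'$ by citing normality of $\Inn(X)$ in $\Aut(X)$, while the paper verifies the same fact by hand using $f^{-1}s_xf=s_{f(x)}$.
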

\begin{proof}
First, we show that there exists an automorphism $f \in \Aut(X)$ such that $f(O) = O^{\prime}$.
We take base points $x \in O$ and $x^{\prime} \in O^{\prime}$.
Since the quandle $X$ is homogeneous, there exists an automorphism $f \in \Aut(X)$ such that $f(x) = x^{\prime}$. 
Let $y \in O$.
Then, by the definition of connected components for quandles, there exist elements $\alpha_1, \dots, \alpha_n \in \Inn(X)$ such that $y = x \cdot \alpha_1 \cdots \alpha_n$. 
Here, we note that any point symmetry $s_x$ at $x \in X$ satisfies $f^{-1} s_x f = s_{f(x)}$ because the map $f$ is a quandle isomorphism.
Thus, we have 
\begin{align*}
    f(y) = f(x \cdot \alpha_1 \cdots \alpha_n) = f(x) \cdot f^{-1} \alpha_1 f \cdots f^{-1} \alpha_n f = x^{\prime} \cdot f^{-1} \alpha_1 f \cdots f^{-1} \alpha_n f.
\end{align*}
Since each conjugate $f^{-1} \alpha_i f$ belongs to $\Inn(X)$,
we have $f(y) \in O^{\prime}$.
Hence, we obtain $f(O) \subset O^{\prime}$. 
Applying the same argument to $f^{-1} \in \Aut(X)$, we obtain $f^{-1}(O^{\prime}) \subset O$, which implies $O^{\prime} \subset f(O)$.
Therefore $f(O) = O^{\prime}$.

We define a set by $B \coloneqq f^{-1} A f$.
Since $f$ is a quandle isomorphism, each conjugate $f^{-1} a f$ with $a \in A$ belongs to $\Inn(X)$. 
Therefore, we have $B \subset \Inn(X)$.
We now claim that $B$ generates $\Inn(X)$.
Let $g \in \Inn(X)$.
Since $A$ generates $\Inn(X)$ there exist $a_1, \cdots, a_n \in A$ and $\varepsilon_1, \cdots, \varepsilon_n \in \{\pm1\}$ such that $f g f^{-1} = a_1^{\varepsilon_1} \cdots a_n^{\varepsilon_n}$.
If we put $b_i \coloneqq f^{-1} a_i f \in B$, then we have
\begin{align*}
    g = f^{-1} a_1^{\varepsilon_1} \cdots a_n^{\varepsilon_n} f = (f^{-1} a_1 f)^{\varepsilon_1} \cdots (f^{-1} a_n f)^{\varepsilon_n} = b_1^{\varepsilon_1} \cdots b_n^{\varepsilon_n}.
\end{align*}
Therefore, $B$ is a generating set of $\Inn(X)$.

We now show that the restriction map $f \colon (O, \dInn_{A}) \to (O^{\prime}, \dInn_{B})$ is an isometry.
Let us take $x, y \in O$ with the distance $\dInn_{A}(x, y)= m$. 
Thus, there exists a path through $m$ edges from $x$ to $y$.
We denote the labels of edges by $a_i \in A$. 
Then by tracing the labels along the path, we have $y = x \cdot (a_1^{\varepsilon_{1}} \cdots a_{m}^{\varepsilon_{m}})$.
We denote the $b_i \coloneqq f^{-1} a_i f \in B$, and then obtain 
\begin{align*}
    f(y) &= y \cdot f = (x \cdot (a_1^{\varepsilon_{1}} \cdots a_{m}^{\varepsilon_{m}})) \cdot f
    = (x \cdot f) \cdot(f^{-1} (a_1^{\varepsilon_{1}} \cdots a_{m}^{\varepsilon_{m}}) f)\\
    &= f(x) \cdot \left((f^{-1} a_1^{\varepsilon_{1}}f)\cdots (f^{-1}a_{m}^{\varepsilon_{m}}f)\right)
    = f(x) \cdot \left( b_1^{\varepsilon_{1}} \cdots b_m^{\varepsilon_{m}} \right).
\end{align*}
This shows that $\dInn_B(f(x), f(y)) \leq m = \dInn_A(x, y)$.
By the same argument replacing $f$ and $A$ with $f^{-1}$ and $B$ respectively, we can see that $\dInn_A(x, y) \leq \dInn_B(f(x), f(y)) $.
Thus, we obtain $f \colon (O, \dInn_{A}) \to (O^{\prime}, \dInn_{B})$ is an isometry.

Finally, by the Theorem $\ref{thm:qi_indep_ch_genset_sch_graph_inn}$, there exists a quasi-isometry map $\iota \colon (O^{\prime}, \dInn_{B}) \to (O^{\prime}, \dInn_{A})$.
Since the composition of an isometry and a quasi-isometry is also a quasi-isometry, we conclude that the map $\iota \circ f\colon (O, \dInn_{A}) \to (O^{\prime}, \dInn_{A})$ is a quasi-isometry.
\end{proof}

%- subsection: Metric of quandle induced by $\Dis$
\subsection{The case of displacement groups}\label{subsec:SchDis}
%- Def: metric on a quandle by Sch. graph wrt. action of Dis
In this subsection, we focus on the action of the displacement group on a quandle.
We define a graph structure for a quandle via the associated Schreier graph and then investigate several properties of this structure.
In particular, we show that if the displacement group acts freely on a connected component, then it is quasi-isometric to that component.
\begin{dfn}\label{def:sch_graph_qdle_dis_act}
Let $X$ be a quandle,
and let $U$ be a generating set of the group $\Dis(X)$.
The Schreier graph $\Schgraph(X, \Dis(X), U)$ (see \cref{def:schreier_graph})
is called the \emph{displacement graph of $X$ with respect to the generating set $U$}, and is denoted by $\SchgraphDis_U(X)$.
The path metric on each connected component induced by $\SchgraphDis_U(X)$ is called the \emph{displacement metric}, and is denoted by $\dDis_U$
\end{dfn}
Similar to \cref{prop:conn_cmpts_sch_graph_inn}, the following result also holds in the case of the action by the displacement group.
%- Prop: connected commponent of \Gamma(X; Dis; A)
\begin{prop}\label{prop:conn_cmpts_sch_graph_dis}
    Let $X$ be a quandle and let $U$ be a generating set of the group $\Dis(X)$.
    Then the set of vertices in a connected component of the displacement graph $\SchgraphDis_{U}(X)$ is a connected component of the quandle $X$, and the converse also holds.
\end{prop}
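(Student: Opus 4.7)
The plan is to reduce this proposition to two already-established facts: the general correspondence between connected components of Schreier graphs and group orbits, and the fact that the inner automorphism group and the displacement group have the same orbits on $X$.

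First, I would apply \cref{prop:conn_cmpts_sch_graph} to the action of $\Dis(X)$ on $X$ with generating set $U$. This yields a bijection between the set of connected components of the Schreier graph $\SchgraphDis_U(X) = \Schgraph(X, \Dis(X), U)$ and the set of $\Dis(X)$-orbits on $X$. In particular, the vertex set of any connected component of $\SchgraphDis_U(X)$ is exactly a $\Dis(X)$-orbit, and every $\Dis(X)$-orbit arises this way.

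Next, I would invoke \cref{prop:properties_displ_grp}(4), which states that the actions of $\Inn(X)$ and $\Dis(X)$ on $X$ have the same orbits. Combined with \cref{def:qdle_aut_grp_hmg_qdle}, which defines a connected component of the quandle $X$ as an $\Inn(X)$-orbit, this means that the $\Dis(X)$-orbits and the connected components of $X$ coincide as subsets of $X$. Chaining this with the previous step gives the desired bijection: the vertex set of each connected component of $\SchgraphDis_U(X)$ is a connected component of the quandle, and vice versa.

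There is no serious obstacle here; the statement is essentially a specialization of \cref{prop:conn_cmpts_sch_graph} to the displacement action, together with the orbit coincidence from \cref{prop:properties_displ_grp}(4). The only point requiring any care is to verify that $\Dis(X)$ indeed acts on $X$ (which it does as a subgroup of $\Aut(X)$) so that \cref{prop:conn_cmpts_sch_graph} applies directly, and to note that $U$ being a generating set of $\Dis(X)$ is exactly the hypothesis needed.
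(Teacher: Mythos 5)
Your argument is correct and matches the paper's proof, which likewise combines \cref{prop:conn_cmpts_sch_graph} with \cref{prop:properties_displ_grp}(4) and the identification of quandle components with $\Inn(X)$-orbits. No issues.
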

\begin{proof}
    By combining \cref{prop:conn_cmpts_sch_graph}, \cref{prop:properties_displ_grp}(4) and \cref{prop:conn_cmpts_sch_graph_inn}, we have the desired result.
\end{proof}
We begin by restating the result from \cref{subsec:SchInn} in the context of the displacement group. As in the case of the inner automorphism group, the quasi-isometry class of each connected component is independent of the choice of the finite generating set for the displacement group.
%- Thm: indenpendency of a choice of finite generating set for \Gamma(X; Dis, A)
\begin{thm}\label{thm:qi_indep_ch_genset_sch_graph_dis}
    Let $X$ be a quandle, and let $O$ be its connected component. 
    If subsets $U$ and $V$ are finite generating sets of $\Dis(X)$, then the identity map 
    \begin{align*} 
        \mathrm{id}|_{O}: (O, \dDis_{U}) \to (O, \dDis_{V}) 
    \end{align*}
    is a quasi-isometry.
\end{thm}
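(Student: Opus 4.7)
The plan is to deduce this theorem as an immediate corollary of \cref{prop:qi_indep_ch_genset_sch_graph}, in exactly the same way that \cref{thm:qi_indep_ch_genset_sch_graph_inn} was deduced. All the work has already been done; what remains is to verify that the hypotheses of the general Schreier graph lemma are met when the acting group is the displacement group.

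First, I would observe that $\Dis(X) \subset \Aut(X)$ acts on $X$ from the right via $x \cdot f \coloneqq f(x)$, and that by \cref{def:sch_graph_qdle_dis_act} the displacement metrics $\dDis_U$ and $\dDis_V$ are, by construction, the path metrics of the Schreier graphs $\Schgraph(X, \Dis(X), U)$ and $\Schgraph(X, \Dis(X), V)$; in particular they coincide with the metrics $\dSch_U$ and $\dSch_V$ from \cref{subsec:SchGraph}. Since $U$ and $V$ are both finite, the group $\Dis(X)$ is finitely generated, which is the standing assumption in \cref{prop:qi_indep_ch_genset_sch_graph}.

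Next, I would identify the connected component $O$ of the quandle with a $\Dis(X)$-orbit: this is precisely the content of \cref{prop:conn_cmpts_sch_graph_dis}, which in turn relies on \cref{prop:properties_displ_grp}(4) to the effect that $\Inn(X)$ and $\Dis(X)$ have the same orbits. With this identification, \cref{prop:qi_indep_ch_genset_sch_graph} applied to $G = \Dis(X)$, $S = U$ and $T = V$ yields directly that $\mathrm{id}|_{O} \colon (O, \dDis_U) \to (O, \dDis_V)$ is a quasi-isometry.

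There is no genuine obstacle here: the nontrivial estimate, namely bounding each element of $U \cup U^{-1}$ by its word length in $V$ and vice versa, was already carried out in the proof of \cref{prop:qi_indep_ch_genset_sch_graph}, and the orbit/component correspondence was already handled in \cref{prop:conn_cmpts_sch_graph_dis}. Consequently, the proof of the theorem reduces to a one-line invocation of these two earlier results, mirroring the proof of the inner-automorphism version.
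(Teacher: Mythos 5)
Your proposal is correct and follows exactly the paper's route: the paper also derives this theorem as an immediate consequence of \cref{prop:qi_indep_ch_genset_sch_graph}, with the component–orbit identification supplied by \cref{prop:conn_cmpts_sch_graph_dis}. Your write-up simply makes explicit the hypothesis checks that the paper leaves implicit.
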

\begin{proof}
    It follows immediately from \cref{prop:qi_indep_ch_genset_sch_graph}.
\end{proof}

%- rmk: there exists a finitely genereted quandle whose dis is not finitely generated quandle.
\begin{rem}
In order to study the quasi-isometry classes defined by the displacement group, it is essential to assume that the displacement group itself is finitely generated.
While \cref{prop:inn_fingen_qdle_fingen} shows that if a quandle is finitely generated, then so is its inner automorphism group, the same does not hold for the displacement group in general.
Indeed, there exist finitely generated quandles whose displacement groups are not finitely generated.
\end{rem}
\begin{proof}
  In this proof, 
  we use some basic facts for a knot $K$ in the $3$-dimensional sphere
  and its knot quandles $Q(K)$.
  See \cite{Kamada-2017-SurfaceKnots4Space,Kawauchi-1996-SurveyKnotTheory} for detail.
  Let us assume that $K$ is non-fibered.
  We now show that the displacement group of $Q(K)$ is not finitely generated.
  Note that the quandle $Q(K)$ is finitely generated.
  First, we recall that for any quandle $Q$, there exists a surjective group homomorphism from the associated group $G_Q$ of $Q$ to the inner automorphism group and its kernel is included in the center of $G_Q$.
  In our case, the group $G_{Q(K)}$ is isomorphic to the knot group $G(K) \coloneqq \pi_1(S^3 \setminus K)$.
  Here, the center of the knot group is trivial
  since $K$ is not a torus knot.
  Hence, the inner automorphism group $\Inn(Q(K))$ is isomorphic to $G(K)$.
  Moreover, 
  the displacement group $\Dis(Q(K))$ is isomorphic to 
  the commutator subgroup of $\Inn(Q(K))$ 
  \cite[Proposition 2.3]{Hulpke-2016-ConnectedQuandlesTransitiveGroups}.
  In conclude, the displacement group $\Dis(Q(K))$
  is isomorphic to the commutator subgroup $[G(K), G(K)]$ of 
  the knot group $G(K)$.
  Since the group $[G(K), G(K)]$ is finitely generated if and only if the knot $K$ is fibered, 
  the knot quandle of a non-fibered knot, for instance, the knot $5_2$, is an example.
\end{proof}

The following conventions and terminology will be used to discuss quasi-isometry of quandles under the action of the displacement group.
Let $X$ and $Y$ be quandles whose displacement groups $\Dis(X)$ and $\Dis(Y)$ are finitely generated.
A connected component $O$ of $X$ is \emph{quasi-isometric with respect to a displacement metric} to a metric space $M$ if the metric space $(O, \dDis_U)$ is quasi-isometric to $M$ for some finite generating set $U$ of $\Dis(X)$.
Two quandles $X$ and $Y$ are \emph{quasi-isometric with respect to a displacement metric} if there exist a map $f \colon X \to Y$ such that the induced map between the sets of connected components on connected components $\Bar{f} \colon \pi_{0}(X) \to \pi_{0}(Y)$ is a bijection and the restriction map $f|_{O} \colon O \to \Bar{f}(O)$ to any connected component $O$ of $X$ is a quasi-isometry with respect to displacement metrics.

%- Cor: all connected components of homogenereous quandle are quasi-isometric (ver Dis).
\begin{cor}\label{cor:all_conn_cmpts_homqdle_qi_dis}
    Let $X$ be a quandle.
    Suppose that the displacement group $\Dis(X)$ is finitely generated. 
    If the quandle $X$ is homogeneous, then any two connected components $O$ and $O^{\prime}$ with displacement metrics are quasi-isometric.
\end{cor}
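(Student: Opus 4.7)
The plan is to mimic the proof of \cref{cor:all_conn_cmpts_homqdle_qi_inn}, replacing $\Inn(X)$ by $\Dis(X)$ throughout, with the key observation that $\Dis(X)$ is normal in $\Aut(X)$ by \cref{prop:properties_displ_grp}(1), so conjugation by an arbitrary automorphism preserves $\Dis(X)$.

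First I would fix base points $x \in O$ and $x' \in O'$, and use the homogeneity of $X$ to pick $f \in \Aut(X)$ with $f(x) = x'$. To see that $f(O) = O'$, I would take $y \in O$ and, using \cref{prop:properties_displ_grp}(4), write $y = x \cdot \gamma$ for some $\gamma \in \Dis(X)$; then $f(y) = x' \cdot (f^{-1} \gamma f)$, and since $\Dis(X) \trianglelefteq \Aut(X)$, the conjugate $f^{-1} \gamma f$ is again in $\Dis(X)$, so $f(y) \in O'$. The reverse inclusion follows by the symmetric argument applied to $f^{-1}$.

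Next, let $U$ be a finite generating set of $\Dis(X)$ and set $V \coloneqq f^{-1} U f$. Normality of $\Dis(X)$ in $\Aut(X)$ gives $V \subset \Dis(X)$, and the conjugation map $g \mapsto f^{-1} g f$ is a group automorphism of $\Dis(X)$ sending the generating set $U$ to $V$, so $V$ is again a finite generating set of $\Dis(X)$. I would then verify that $f|_{O} \colon (O, \dDis_U) \to (O', \dDis_V)$ is an isometry by the same labeled-path argument as in \cref{cor:all_conn_cmpts_homqdle_qi_inn}: any $\dDis_U$-geodesic $y = x \cdot (u_1^{\varepsilon_1} \cdots u_m^{\varepsilon_m})$ with $u_i \in U$ transports under $f$ to a path $f(y) = f(x) \cdot (v_1^{\varepsilon_1} \cdots v_m^{\varepsilon_m})$ with $v_i = f^{-1} u_i f \in V$, giving $\dDis_V(f(x), f(y)) \le \dDis_U(x, y)$, and the opposite inequality follows by applying the same argument to $f^{-1}$.

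Finally, I would invoke \cref{thm:qi_indep_ch_genset_sch_graph_dis} to produce a quasi-isometry $\iota \colon (O', \dDis_V) \to (O', \dDis_U)$, and conclude that the composition $\iota \circ f|_{O} \colon (O, \dDis_U) \to (O', \dDis_U)$ is a quasi-isometry. The only substantive point beyond a cosmetic rewriting of the inner case is the appeal to normality of $\Dis(X)$ in $\Aut(X)$, which plays the role that was automatic in the inner case (where $f^{-1} s_a f = s_{f(a)}$ gave the conjugation formula directly); since this is already recorded in \cref{prop:properties_displ_grp}(1), I do not expect any genuine obstacle.
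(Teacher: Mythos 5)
Your proposal is correct and is essentially the paper's own argument: the paper's proof is a one-line instruction to replace $\Inn(X)$ by $\Dis(X)$ in the proof of \cref{cor:all_conn_cmpts_homqdle_qi_inn}, invoking \cref{prop:properties_displ_grp} (1) and (4), which is exactly what you carry out. You correctly identify the one substantive point, namely that normality of $\Dis(X)$ in $\Aut(X)$ replaces the explicit conjugation formula $f^{-1}s_a f = s_{f(a)}$ used in the inner case.
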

\begin{proof}
    The claim follows by applying \cref{prop:conn_cmpts_sch_graph_dis}, \cref{prop:properties_displ_grp} (1) and (4), and by replacing $\Inn(X)$ with $\Dis(X)$ in the proof of \cref{cor:all_conn_cmpts_homqdle_qi_inn}, which yields a similar argument.
\end{proof}

At the last of this subsection, we consider the case that the displacement group acts freely.
In this situation, the metric on each connected component of a quandle with the displacement metric can be identified with the word metric of the displacement group.
We remark that the inner automorphism group cannot act freely due to the first axiom of quandles.
%%%
\begin{prop}
  Let $X$ be a quandle.
  We suppose that $S$ is a finite generating set of the displacement group of $X$.
  Let us take a connected component $O \subset X$ and a base point $p \in O$.
  If the displacement group acts freely on $O$,
  then the map $\alpha\colon (\Dis(X), d_S) \to (O, d^\Dis_S)$ defined by $\alpha(g) = p \cdot g$ is an isometry, where $d_{S}$ is the word metric of $\Dis(X)$ with respect to $S$. 
\end{prop}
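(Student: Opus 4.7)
The plan is to verify that $\alpha$ is a bijection and that it preserves distances exactly, from which the isometry conclusion follows. The argument is short because the free-action hypothesis does almost all of the work: it lets one transport equalities in $O$ back to equalities in $\Dis(X)$.

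\textbf{Bijectivity.} By \cref{prop:properties_displ_grp}(4), the orbits of $\Inn(X)$ and of $\Dis(X)$ on $X$ coincide, so the connected component $O$ (an $\Inn(X)$-orbit by \cref{def:qdle_aut_grp_hmg_qdle}) is a single $\Dis(X)$-orbit; thus $\alpha$ is surjective. For injectivity, suppose $\alpha(g) = \alpha(h)$, i.e.\ $p \cdot g = p \cdot h$. Then $p \cdot (g h^{-1}) = p$, and the freeness of the action on $O$ forces $g h^{-1} = e$.

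\textbf{Isometry.} Recall that the word distance satisfies $d_S(g, h) = |g^{-1} h|_S$, i.e.\ the smallest $n \ge 0$ for which there exist $s_1, \dots, s_n \in S$ and $\varepsilon_1, \dots, \varepsilon_n \in \{\pm 1\}$ with $g^{-1} h = s_1^{\varepsilon_1} \cdots s_n^{\varepsilon_n}$. On the other hand, by the definition of the Schreier graph $\SchgraphDis_S(X)$ and its induced path metric, $\dDis_S(\alpha(g), \alpha(h))$ is the smallest $n$ for which there exist such $s_i, \varepsilon_i$ with $\alpha(h) = \alpha(g) \cdot s_1^{\varepsilon_1} \cdots s_n^{\varepsilon_n}$.

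The key step is to identify these two minimization problems using the free action. Given any word $w = s_1^{\varepsilon_1} \cdots s_n^{\varepsilon_n} \in \Dis(X)$, the equation $\alpha(h) = \alpha(g) \cdot w$ reads $p \cdot h = p \cdot (g w)$; by the freeness of the $\Dis(X)$-action on $O$, this is equivalent to $h = g w$, i.e.\ $w = g^{-1} h$. Hence the two sets of admissible words witnessing the two distances literally coincide, so their minimum lengths agree, yielding $\dDis_S(\alpha(g), \alpha(h)) = d_S(g, h)$.

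I do not foresee a substantive obstacle; the only point requiring any care is the convention for the word metric (left-invariant, so that it matches the path metric of the Cayley/Schreier graph whose edges are $x \mapsto x \cdot s$), but this is exactly what is used both in the definition of $\dDis_S$ via \cref{def:schreier_graph} and in the standard definition of $d_S$.
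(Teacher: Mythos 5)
Your proposal is correct and follows essentially the same route as the paper: surjectivity from the transitivity of the $\Dis(X)$-action on $O$ (\cref{prop:properties_displ_grp}(4)), and distance preservation by using freeness to convert any word witnessing $\dDis_S(\alpha(g),\alpha(h))$ into the same group element $g^{-1}h$, hence matching the two minimizations. The only cosmetic difference is that the paper phrases the second inequality as a proof by contradiction and leaves injectivity implicit (it follows from a distance-preserving surjection), whereas you identify the witness sets directly and check injectivity explicitly.
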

\begin{proof}
  Since the action of the displacement group $\Dis(X)$ on a connected component is transitive by \cref{prop:properties_displ_grp} (4), we have the map $\alpha$ is surjective.
  It remains to show that $\alpha$ preserves the metric.
  Let us take $x, y \in \Dis(X)$ with $d_S(x, y) = n$.
  Thus there exist $a_1, \dots, a_n \in S \cup S^{-1}$ such that 
  $y = x a_1 \cdots a_n$.
  Then, we have $\alpha(y) = \alpha(x) \cdot (a_1 \cdots a_n)$,and hence it satisfies that $d_S^\Dis(\alpha(x), \alpha(y)) \leq n$.
  Here, let us assume that $d_S^\Dis(\alpha(x), \alpha(y)) = m < n$.
  Then there exists $b_1, \dots, b_m \in S \cup S^{-1}$ such that $\alpha(y) = \alpha(x) \cdot (b_1 \cdots b_m)$.
  Since the action of $\Dis(X)$ on $O$ is free, it satisfies $a_1 \cdots a_n = b_1 \cdots b_m$.
  Therefore we have $y = x b_1 \cdots b_m$, and hence $d_S(x, y) \leq m$.
  This contradicts $d_S(x, y) = n > m \geq d_S(x, y)$ which completes the proof.
\end{proof}

By the above proposition,
we immediately obtain the following theorem.
% - Thm: quandle-theoretic Milnor-Svarz
\begin{thm}\label{quandle-theoretic Milnor-Svarz}
  Let $X$ be a quandle with a connected component $O$ that the displacement group $\Dis(X)$ freely acts on.
  If the group $\Dis(X)$ is finitely generated, then the metric space $O$ with a displacement metric is quasi-isometric to the metric space $\Dis(X)$ with a word metric.
\end{thm}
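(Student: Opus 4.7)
The plan is to derive this directly from the preceding proposition, which is where the real content lives. Since $\Dis(X)$ is finitely generated by hypothesis, fix a finite generating set $S \subset \Dis(X)$ and choose a base point $p \in O$. The preceding proposition then hands us the map $\alpha \colon \Dis(X) \to O$, $g \mapsto p \cdot g$, together with the statement that $\alpha$ is an isometry from $(\Dis(X), d_S)$ onto $(O, d^{\Dis}_S)$. Any isometry is a quasi-isometry (take $\lambda = 1$, $k = 0$ in the embedding inequality of \cref{dfn:qi_metric_sp}, and $C = 0$ for the coarsely dense image condition, which is satisfied since $\alpha$ is surjective), so the conclusion holds at least for this one matched pair of generating sets.

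To upgrade this to the statement as written, which speaks of \emph{any} word metric on $\Dis(X)$ and \emph{any} displacement metric on $O$, I would compose with two independence results. On the quandle side, \cref{thm:qi_indep_ch_genset_sch_graph_dis} ensures that $(O, d^{\Dis}_U)$ is quasi-isometric to $(O, d^{\Dis}_S)$ for any finite generating set $U$ of $\Dis(X)$. On the group side, the familiar analogue for Cayley graphs (which is essentially \cref{prop:qi_indep_ch_genset_sch_graph} applied to the regular action of $\Dis(X)$ on itself) says that the quasi-isometry class of $(\Dis(X), d_T)$ does not depend on the finite generating set $T$. Chaining these with the isometry $\alpha$ and using that quasi-isometry is an equivalence relation yields the theorem in its stated generality.

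There is really no obstacle left at this stage: the substantive ingredients, namely the transitivity of the $\Dis(X)$-action on $O$ (from \cref{prop:properties_displ_grp}(4)) and the injectivity of $\alpha$ coming from freeness of the action, have already been used inside the preceding proposition. The body of this proof is therefore only a matter of saying ``an isometry is a quasi-isometry'' and invoking the two independence lemmas, which is exactly why the paper flags the theorem as an immediate consequence.
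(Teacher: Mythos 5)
Your proposal is correct and follows exactly the paper's route: the paper also derives the theorem immediately from the preceding proposition (the map $\alpha(g)=p\cdot g$ being an isometry), with the observation that an isometry is in particular a quasi-isometry. The extra remarks about independence of the generating set via \cref{thm:qi_indep_ch_genset_sch_graph_dis} and \cref{prop:qi_indep_ch_genset_sch_graph} are a harmless (and reasonable) elaboration of what the paper leaves implicit.
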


\begin{rem}
  Eisermann \cite{Eisermann-2014-QuandleCoveringsTheirGalois}developed the covering theory of quandles.
  In particular, he defined the simply connectedness for quandles.
  The displacement group of such a quandle acts freely.
  In other words, by \cref{quandle-theoretic Milnor-Svarz}, a simply connected quandle $X$ is quasi-isometric to a finitely generated group acting (freely) on itself.
  Here, the theorem reminds us Milnor--\v{S}varc lemma (for instance, see \cite{Loh-2017-GeometricGroupTheoryIntroduction}).
  The lemma states that the universal covering manifold $\widetilde{M}$ of a compact Riemannian manifold $M$ is quasi-isometric to the fundamental group of $M$, which is finitely generated and acts properly discontinuously on the simply connected space $\widetilde{M}$.
  Hence, \cref{quandle-theoretic Milnor-Svarz} may be regarded as an analogue of Milnor--\v{S}varc lemma in quandle theory.
\end{rem}

%- \subsection{Example Of Not Quasi-Isometry between $\Inn$-Metric and $\Dis$ Metric}
\subsection{Difference of inner automorphism groups and displacement groups}\label{sec:ex_not_qi_inn_dis}
As discussed above, we have defined two metrics on a quandle by natural group actions.
If both groups are finitely generated, then each connected component of the quandle admits two distinct quasi-isometry classes, corresponding to the metrics induced by these two group actions.
In this subsection, we give an explicit quandle whose quasi-isometry classes, as induced by the two group actions, are not the same.
More precisely, the infinite dihedral quandle $R_\infty = (\mathbb{Z}, \qop)$ given in \cref{ex:infty_dihedral_qdle} is the one.
Here we recall that the quandle $R_\infty$ is homogeneous, and it has two connected components $O_{\mathrm{even}}$ and $O_{\mathrm{odd}}$.
It is enough to focus only on the structure of $O_{\mathrm{even}}$ by \cref{cor:all_conn_cmpts_homqdle_qi_inn,cor:all_conn_cmpts_homqdle_qi_dis}. 
We now compute a quasi-isometry invariant, the number of ends (\cref{def:number_ends_graph}), for the set $O_{\mathrm{even}}$ with metrics defined by the group actions.
First, we consider the case of the inner metric.%action of $\Inn(R_{\infty})$.
We also recall that the set $A = \{s_{0}, s_{1}\}$ is a generating set of $\Inn(R_{\infty})$.

\begin{lem}\label{lem:number_of_ends_inn}
    The number of ends for the metric space $(O_{\mathrm{even}}, \dInn_{A})$ is equal to one.
\end{lem}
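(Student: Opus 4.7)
The plan is to show that the inner graph $\SchgraphInn_A$, restricted to the connected component $O_{\mathrm{even}}$, is isomorphic as a graph (up to a self-loop at $0$) to the infinite ray $\mathbb{N}$, and then observe that the number of ends of a ray is $1$.

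First I would enumerate the edges. Because $s_0$ and $s_1$ are involutions of $R_{\infty}$ (one checks immediately that $x \cdot s_0 s_0 = -(-x) = x$ and $x \cdot s_1 s_1 = 2 - (2-x) = x$), we have $S^{-1} = S$, so an edge from $2n$ is placed only to $2n \cdot s_0 = -2n$ and to $2n \cdot s_1 = 2 - 2n$. Hence the edges of $\SchgraphInn_A|_{O_{\mathrm{even}}}$ are exactly
\begin{align*}
    \bigl\{\, \{-2k,\, 2k\} \,\bigm|\, k \ge 1 \,\bigr\}
    \;\cup\;
    \bigl\{\, \{-2k,\, 2k+2\} \,\bigm|\, k \ge 0 \,\bigr\},
\end{align*}
together with a self-loop at $0$ coming from $0 \cdot s_0 = 0$. (The self-loop contributes nothing to the path metric and may be ignored.)

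Next I would exhibit an explicit graph isomorphism $\varphi\colon \NN \to O_{\mathrm{even}}$ to the standard ray, defined by $\varphi(0) = 0$, $\varphi(2k-1) = 2k$, and $\varphi(2k) = -2k$ for $k \ge 1$. A direct check against the edge list above shows that $\varphi(j)$ and $\varphi(j+1)$ are adjacent in $\SchgraphInn_A$ for every $j \ge 0$, and that no other edges exist; equivalently, every vertex of $O_{\mathrm{even}}$ other than $0$ has exactly two neighbors, while $0$ has exactly one neighbor (namely $2$). Thus $\varphi$ is an isomorphism of graphs and, in particular, an isometry of the induced path metrics, so it suffices to compute the number of ends of $\NN$.

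Finally, with basepoint $0 \in \NN$, the ball $B(n)$ equals $\{0, 1, \dots, n\}$, and its complement $\NN \setminus B(n) = \{n+1, n+2, \dots\}$ is itself connected and unbounded. Hence $\|\NN \setminus B(n)\| = 1$ for every $n$, so $e(\NN) = 1$ and therefore $e(O_{\mathrm{even}}, \dInn_A) = 1$. The main (minor) obstacle is just the bookkeeping in verifying that the edge set is exactly the one listed above and that the map $\varphi$ realizes it as a ray; once that identification is made, the end count is immediate.
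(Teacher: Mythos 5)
Your proof is correct and follows essentially the same route as the paper: both arguments exhibit the very same bijection $0, 2, -2, 4, -4, \dots$ identifying $(O_{\mathrm{even}}, \dInn_A)$ isometrically with the standard ray $\mathbb{Z}_{\geq 0}$ and then invoke quasi-isometry invariance of the number of ends. Your version merely adds the explicit edge enumeration and the observation that $s_0, s_1$ are involutions, which the paper leaves implicit.
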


\begin{proof} 
    Here, we denote the set $O_{\mathrm{even}}$ by $O$.
    We define a map $\gamma \colon \mathbb{Z}_{\geq 0} \to O$ by 
    \begin{align*}
        \gamma(k) = 
        \begin{cases}
            -k      & \textit{if $k$ is even}, \\
            k+1     & \textit{if $k$ is odd}. \\
        \end{cases}
    \end{align*}
    Then the map $\gamma$ is bijective and satisfies
    \begin{align*}
        \gamma(k) \cdot s_{0} = 
        \begin{cases}
            \gamma(0)   & \textit{if $k=0$}, \\
            \gamma(k-1) & \textit{if $k$ is even and positive}, \\
            \gamma(k+1) & \textit{if $k$ is odd}, \\
        \end{cases}
        \quad
        \gamma(k) \cdot s_{1} = 
        \begin{cases}
            \gamma(k+1)    & \textit{if $k$ is even}, \\
            \gamma(k-1)    & \textit{if $k$ is odd}. \\
        \end{cases}
    \end{align*}
    We equip the set $\mathbb{Z}_{\geq 0}$ with the standard graph metric $d_{\mathbb{Z}{\geq 0}}$, where $k$ and $k+1$ are joined by an edge of length one. 
    Observe that the map $\gamma \colon (\mathbb{Z}_{\geq 0}, d_{\mathbb{Z}_{\geq 0}}) \to (O, \dInn)$ is an isometry.
    Since the number of ends is invariant under quasi-isometries, we obtain 
    \begin{align*} 
        e(O, \dInn_A) = e(\mathbb{Z}_{\geq 0}, d_{\mathbb{Z}_{\geq 0}}).
    \end{align*}
    It is easy to see that $\mathbb{Z}_{\geq 0}$ is one-ended, so we conclude as desired.
\end{proof}

Next, we turn to the displacement metric. %associated with the displacement group. 
We recall that the set $U = \{s_{1}s_{0}^{-1}\}$ is a generating set of $\Dis(R_{\infty})$.
\begin{lem}\label{lem:number_of_ends_dis}
    The number of ends for the metric space $(O_{\mathrm{even}}, \dDis_{U})$ is equal to two.
\end{lem}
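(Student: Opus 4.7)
The plan is to produce an explicit isometry between $(O_{\mathrm{even}}, \dDis_{U})$ and $(\mathbb{Z}, d_{\mathbb{Z}})$ with its standard graph metric, and then invoke the quasi-isometry invariance of the number of ends, noting that the standard bi-infinite path $\mathbb{Z}$ has exactly two ends.

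The key observation is that by \cref{ex:infty_dihedral_qdle}(1) the unique generator $s_{1}s_{0}^{-1}$ acts on $R_{\infty}$ by $x \cdot (s_{1}s_{0}^{-1}) = x - 2$. Consequently, in the displacement graph $\SchgraphDis_{U}(R_{\infty})$ restricted to the component $O_{\mathrm{even}} = 2\mathbb{Z}$, each vertex $2n$ has exactly two neighbors, namely $2n - 2$ and $2n + 2$. Thus the induced graph on $O_{\mathrm{even}}$ is a bi-infinite path.

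Concretely, I would define the bijection
\begin{align*}
    \varphi \colon O_{\mathrm{even}} \longrightarrow \mathbb{Z}, \qquad \varphi(2n) = n,
\end{align*}
and verify by the above computation that two vertices $2n, 2m \in O_{\mathrm{even}}$ are joined by an edge in $\SchgraphDis_{U}(R_{\infty})$ precisely when $\lvert n - m \rvert = 1$. Hence $\varphi$ is a graph isomorphism onto the standard Cayley graph of $\mathbb{Z}$, and in particular an isometry between $(O_{\mathrm{even}}, \dDis_{U})$ and $(\mathbb{Z}, d_{\mathbb{Z}})$.

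Since the number of ends is a quasi-isometry invariant, this gives
\begin{align*}
    e(O_{\mathrm{even}}, \dDis_{U}) \;=\; e(\mathbb{Z}, d_{\mathbb{Z}}) \;=\; 2,
\end{align*}
where the last equality is clear: for any $n \geq 0$ the complement $\mathbb{Z} \setminus B(n)$ consists of the two unbounded components $\{k \in \mathbb{Z} \mid k > n\}$ and $\{k \in \mathbb{Z} \mid k < -n\}$. There is no serious obstacle here; the only point to be careful about is to use the explicit formula for $s_{1}s_{0}^{-1}$ from \cref{ex:infty_dihedral_qdle} to confirm that the generator together with its inverse produces exactly the two edges $2n \to 2n \pm 2$ incident to each vertex, so that the graph really is a single bi-infinite line rather than something more complicated.
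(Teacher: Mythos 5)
Your proof is correct and follows essentially the same route as the paper: the paper defines the inverse map $\gamma'(k) = 2k$, checks that $s_1 s_0^{-1}$ and its inverse shift $\gamma'(k)$ to $\gamma'(k \mp 1)$, concludes this is an isometry onto $(\mathbb{Z}, d_{\mathbb{Z}})$, and invokes quasi-isometry invariance of the number of ends together with the fact that $\mathbb{Z}$ is two-ended. Your only additions are spelling out why $\mathbb{Z}$ has two ends and phrasing the identification as a graph isomorphism, both of which are fine.
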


\begin{proof}
Here, we also denote the set $O_{\mathrm{even}}$ by $O$.
We equip the set $\mathbb{Z}$ with the natural metric $d_{\mathbb{Z}}$, where $k$ and $k \pm 1$ are joined by an edge of length one.
We define a map $\gamma^{\prime} \colon \mathbb{Z} \to O$ by
\begin{align*}
    \gamma^{\prime}(k) = 2k,\quad \text{for all } k \in \mathbb{Z}.
\end{align*}
Then the map $\gamma^{\prime}$ is bijective and satisfies
\begin{align*}
    \gamma^{\prime}(k) \cdot (s_{1}s_{0}^{-1}) = \gamma^{\prime}(k-1),\quad
    \gamma^{\prime}(k) \cdot (s_{1}s_{0}^{-1})^{-1} = \gamma^{\prime}(k+1).
\end{align*}
Hence, the map $\gamma^{\prime} \colon (\mathbb{Z}, d_{\mathbb{Z}}) \to (O, \dInn_A)$ is an isometry, and we have
\begin{align*} 
    e(O, \dDis_U) = e(\mathbb{Z}, d_{\mathbb{Z}}).
\end{align*}
Since $\mathbb{Z}$ is two-ended, we conclude as desired.
\end{proof}

The next theorem follows from the above two lemmas.

\begin{thm}\label{thm:fgqdle_sch_graph_inn_dis_notqi}
    There exists a quandle $X$ and its connected component $O$ which satisfy the following properties:
    \begin{enumerate}[font=\normalfont]
        \item The groups $\Inn(X)$ and $\Dis(X)$ are finitely generated.
        \item For any finite generating set $A$ of $\Inn(X)$ and $U$ of $\Dis(X)$, the metric spaces $(O, \dInn_A)$ and $(O, \dDis_{U})$ are not quasi-isometric.
    \end{enumerate}
\end{thm}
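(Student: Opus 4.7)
The plan is to exhibit the infinite dihedral quandle $R_{\infty}$ of \cref{ex:infty_dihedral_qdle} as a witness, with $O \coloneqq O_{\mathrm{even}}$ as the chosen connected component. The heavy computational work has already been done in \cref{lem:number_of_ends_inn,lem:number_of_ends_dis}, so the proof is essentially a matter of assembling the right quasi-isometry invariant.

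First, I would verify (1). From \cref{ex:infty_dihedral_qdle}(2)--(3), the set $A = \{s_{0}, s_{1}\}$ generates $\Inn(R_{\infty})$ and $U = \{s_{1}s_{0}^{-1}\}$ generates $\Dis(R_{\infty})$; both are finite, so both groups are finitely generated.

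For (2), the key observation is that the number of ends $e(\cdot)$ of a metric space, as recalled in \cref{def:number_ends_graph}, is a quasi-isometry invariant. Let $A'$ be any finite generating set of $\Inn(R_{\infty})$ and $U'$ any finite generating set of $\Dis(R_{\infty})$. By \cref{thm:qi_indep_ch_genset_sch_graph_inn}, the identity map $(O, \dInn_{A}) \to (O, \dInn_{A'})$ is a quasi-isometry, and similarly by \cref{thm:qi_indep_ch_genset_sch_graph_dis} the identity $(O, \dDis_{U}) \to (O, \dDis_{U'})$ is a quasi-isometry. Consequently,
\begin{align*}
    e(O, \dInn_{A'}) = e(O, \dInn_{A}) = 1, \qquad e(O, \dDis_{U'}) = e(O, \dDis_{U}) = 2,
\end{align*}
where the two explicit values are given by \cref{lem:number_of_ends_inn} and \cref{lem:number_of_ends_dis} respectively.

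Finally, if $(O, \dInn_{A'})$ and $(O, \dDis_{U'})$ were quasi-isometric for some choice of $A'$ and $U'$, then by the quasi-isometry invariance of the number of ends we would obtain $1 = 2$, a contradiction. Hence no such pair of finite generating sets can make the two metric spaces quasi-isometric, and the theorem follows. There is no real obstacle here: the substantive content lies in the two preceding lemmas where concrete isometries with $\mathbb{Z}_{\geq 0}$ and $\mathbb{Z}$ are constructed, and the present proof merely invokes them together with the generating-set-independence theorems.
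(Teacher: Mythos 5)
Your proposal is correct and follows essentially the same route as the paper: the infinite dihedral quandle $R_{\infty}$ with $O = O_{\mathrm{even}}$, finite generation from \cref{ex:infty_dihedral_qdle}, and the number of ends (one versus two, via \cref{lem:number_of_ends_inn,lem:number_of_ends_dis}) as the distinguishing quasi-isometry invariant. If anything, you are slightly more careful than the paper in explicitly invoking \cref{thm:qi_indep_ch_genset_sch_graph_inn,thm:qi_indep_ch_genset_sch_graph_dis} to pass from the fixed generating sets $A$, $U$ to arbitrary finite generating sets, which the paper leaves implicit.
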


\begin{proof}
Let $X = R_\infty$ be the infinite dihedral quandle, as defined in \cref{ex:infty_dihedral_qdle}. 
As shown in example \cref{ex:infty_dihedral_qdle}, both $\Inn(R_\infty)$ and $\Dis(R_\infty)$ are finitely generated. 
We fix generating sets $A = \{s_0, s_1\} \subset \Inn(R_\infty)$ and $U = \{s_0 s_1\} \subset \Dis(R_\infty)$.
By \cref{lem:number_of_ends_inn}, the number of ends for each connected component for the Schreier graph $\SchgraphInn_A(R_\infty)$ is one. 
In contrast, by \cref{lem:number_of_ends_dis}, the number of ends for each component of $\SchgraphDis_U(R_\infty)$ is two.
Since the number of ends is a quasi-isometry invariant, and since the components $(X, \dInn_A)$ and $(X, \dDis_U)$ have different numbers of ends, it follows that these components are not quasi-isometric.
This proves that $(X, \dInn_{A})$ and $(X, \dDis_{U})$ are not quasi-isometric.
\end{proof}
% ------------------------------------------------------------------------------
\section{Displacement groups of generalized Alexander quandles}\label{sec:generalized Alexander quandle}
In \cref{sec:examples}, we provide some examples of quandles that are quasi-isometric to typical metric spaces.
Most of these are obtained by applying \cref{quandle-theoretic Milnor-Svarz}.
In this section, we study the case where the assumption of the theorem holds, that is, the displacement group acts freely.
Quandles for which conditions hold are essentially isomorphic to generalized Alexander quandles as shown in \cref{quandle with transitive action}.
These quandles play an important role in quandle theory.
For example, any homogeneous quandle is represented as a quotient of a generalized Alexander quandle 
(see \cite{Joyce-1982-ClassifyingInvariantKnotsKnot,Hulpke-2016-ConnectedQuandlesTransitiveGroups}).
These quandles are studied in detail in 
\cite{Higashitani-2024-GeneralizedAlexanderQuandlesFinite,Higashitani-2024-ClassificationGeneralizedAlexanderQuandles}.
First, we recall the definition of generalized Alexander quandles.

% - Def: Generalized Alexander quandle
\begin{dfn}
  Let $G$ be a group and let $\sigma\colon G \to G$ be its automorphism.
  The \emph{generalized Alexander quandle} is a group equipped with a binary operation $\qop$ given by
  \[
    x \qop y \coloneqq \sigma(x y^{-1})y
  \]
  for $x, y \in G$, and is denoted by $\GAlex(G, \sigma)$.
\end{dfn}

% - GAlex is a quandle
One can easily check that the generalized Alexander quandle is a quandle.
By induction, we have $x \qop^n y = \sigma^{n}(xy^{-1})y$ for any $x, y \in \GAlex(G, \sigma)$ and $n \in \ZZ$.
The group $G$ acts on $\GAlex(G, \sigma)$ from the right as quandle automorphisms by $x \cdot g \coloneqq xg$ 
for $x \in \GAlex(G, \sigma)$ and $g \in G$.
In other words, the map $R_g\colon G \to G$ defined by $R_g(x) = xg$ for $g \in G$ is in $\Aut(\GAlex(G, \sigma))$.
In the following, we identify the group $G$ as a subgroup of $\Aut(\GAlex(G, \sigma))$.
In particular, a generalized Alexander quandle is a homogeneous quandle.
Conversely, we now show that a quandle for which a normal subgroup of its automorphism group acts freely and transitively is isomorphic to a generalized Alexander quandle.

% - Prop: A quandle with a free and transitive right action as quandle automorphisms is isomorphic to GAlex.
\begin{prop}\label{quandle with transitive action}
  Let $X$ be a quandle.
  Then, there exists a normal subgroup $G$ of $\Aut(X)$ acts freely and transitively on $X$ if and only if the quandle $X$ is isomorphic to a generalized Alexander quandle as quandles.
\end{prop}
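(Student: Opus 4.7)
The plan is to prove the two directions of the iff separately, with the substantive computation in the ``only if'' direction, where a free and transitive normal subgroup action is turned into a generalized Alexander presentation; the ``if'' direction produces the required normal subgroup as the right-translation embedding of $G$ in $\Aut(\GAlex(G, \sigma))$.

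For the ``only if'' direction, suppose $G \trianglelefteq \Aut(X)$ acts freely and transitively on $X$. I would fix a base point $e \in X$ and define $\Phi \colon G \to X$ by $\Phi(g) = g(e) = e \cdot g$; by free transitivity this is a bijection, and under the paper's conventions $fg = g \circ f$ and $x \cdot f = f(x)$ one has the equivariance $\Phi(gh) = h(g(e)) = \Phi(g) \cdot h$. Since $s_e$ lies in $\Inn(X) \subseteq \Aut(X)$, normality of $G$ yields a group automorphism $\sigma \colon G \to G$ characterized by $s_e \circ g = \sigma(g) \circ s_e$ as functions (equivalently $\sigma(g) = s_e^{-1} g s_e$ inside $\Aut(X)$). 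The central computation then transports the quandle operation through $\Phi$: applying the conjugation identity $s_{h(e)} = h \circ s_e \circ h^{-1}$ together with the first quandle axiom $s_e(e) = e$,
\begin{align*}
\Phi(g) \qop \Phi(h)
&= s_{h(e)}(g(e)) = h\bigl(s_e(h^{-1}(g(e)))\bigr) \\
&= h\bigl(s_e(\Phi(gh^{-1}))\bigr) = h\bigl(\sigma(gh^{-1})(e)\bigr) = \Phi\bigl(\sigma(gh^{-1})\, h\bigr),
\end{align*}
so the transported operation on $G$ is $g \qop' h = \sigma(gh^{-1})h$, precisely the $\GAlex(G, \sigma)$ operation; hence $\Phi$ is a quandle isomorphism.

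For the ``if'' direction, suppose $X \cong \GAlex(G, \sigma)$. I would embed $G$ in $\Aut(X)$ via right translations $R_g(x) \coloneqq xg$; a direct substitution into $x \qop y = \sigma(xy^{-1})y$ shows $R_g(x \qop y) = R_g(x) \qop R_g(y)$, so $R_g$ is a quandle automorphism, and right multiplication of $G$ on itself is evidently free and transitive. To establish normality, given any $\varphi \in \Aut(X)$ I would decompose $\varphi = R_{\varphi(e)} \circ \psi$ with $\psi \coloneqq R_{\varphi(e)^{-1}} \circ \varphi$, which fixes $e$; since $R_a \circ R_g \circ R_a^{-1} = R_{a^{-1}ga} \in G$, the problem reduces to showing $\psi \circ R_g \circ \psi^{-1}$ is a right translation for every $\psi \in \Aut(X)$ with $\psi(e) = e$, equivalently the multiplicativity $\psi(xg) = \psi(x)\psi(g)$.

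The main obstacle is this last multiplicativity step: extracting the group homomorphism property of $\psi$ from the quandle automorphism condition $\psi(\sigma(xy^{-1})y) = \sigma(\psi(x)\psi(y)^{-1})\psi(y)$ together with $\psi(e) = e$. Setting $y = e$ immediately gives $\psi \circ \sigma = \sigma \circ \psi$, but obtaining $\psi(xy) = \psi(x)\psi(y)$ in general requires a further substitution that isolates $\sigma$ on the right-hand side and then uses that $\sigma$ is a group automorphism to cancel the twist; the cleanest approach is probably to apply $\sigma^{-1}$ to the rearranged identity and use $\sigma^{-1}\psi = \psi\sigma^{-1}$ to force the multiplicativity as a residue.
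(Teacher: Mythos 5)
Your ``only if'' direction is correct and coincides with the paper's own proof: both arguments fix a base point $e$, use normality only to form $\sigma(g)=s_e^{-1}gs_e$, and verify that $g\mapsto e\cdot g$ carries the $\GAlex(G,\sigma)$ operation to $\qop$; your chain of equalities is the paper's computation read from the other end.

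The genuine gap is in the ``if'' direction, at exactly the step you flag as the main obstacle. Your reduction of normality to the claim that every $\psi\in\Aut(X)$ with $\psi(e)=e$ satisfies $\psi(xg)=\psi(x)\psi(g)$ is sound, but that claim is false, and no substitution into $\psi(\sigma(xy^{-1})y)=\sigma(\psi(x)\psi(y)^{-1})\psi(y)$ will rescue it. Take $\sigma=\mathrm{id}_G$: then $x\qop y=x$, so $\Aut(X)$ is the full symmetric group of the underlying set, every bijection fixing $e$ is a quandle automorphism, and for infinite $G$ almost none of these are multiplicative. In this example the conclusion itself fails: for $G=\ZZ$ the translation subgroup is not normal in $\mathrm{Sym}(\ZZ)$, and by the Schreier--Ulam--Baer description of the normal subgroups of the infinite symmetric group (trivial, finitary alternating, finitary symmetric, and the whole group) none of them acts both freely and transitively; yet $X\cong\GAlex(\ZZ,\mathrm{id})$. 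So the ``if'' direction of the proposition is false as literally stated. You are in good company: the paper's proof of this direction silently assumes the same multiplicativity when it writes $\tau(\tau^{-1}(x)g)=x\tau(g)$ for an arbitrary $\tau\in\Aut(X)$. What is true, and is all that the ``only if'' direction (and the paper's later use of this proposition) requires, is that $\{R_g\mid g\in G\}$ is normalized by $\Inn(X)$: a direct computation gives $s_y\circ R_g\circ s_y^{-1}=R_{y^{-1}\sigma(ygy^{-1})y}$. Replacing ``normal subgroup of $\Aut(X)$'' by ``subgroup normalized by the point symmetries'' makes both directions of your argument, and of the paper's, go through.
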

\begin{proof}
  First, we assume a normal subgroup $G$ of $\Aut(X)$ acts freely and transitively on $X$. 
  Let us take a base point $x_0 \in X$.
  Since $G$ is a normal subgroup, it is closed under the conjugation by any element in $\Aut(X)$.
  Thus, the map $\sigma\colon G \to G$ defined by $\sigma(g) \coloneqq s_{x_0}^{-1} g s_{x_0}$ is a well-defined group automorphism on $G$.
  Here, we now show that the map $f\colon \GAlex(G, \sigma) \to X$ defined by $f(g) \coloneqq x_0 \cdot g$ is a quandle isomorphism.
  The map $f$ is bijective since the action of $G$ on $X$ is transitive and free.
  For $g, h \in \GAlex(G, \sigma)$, we have 
  \begin{align*}
    f(g \qop h) &= f(\sigma(gh^{-1})h) 
    = f(s_{x_0}^{-1}gh^{-1}s_{x_0} h)
    = x_0 \cdot (s_{x_0}^{-1}gh^{-1}s_{x_0} h)
    = (x_0 \cdot s_{x_0}^{-1}) \cdot (g \, (h^{-1}s_{x_0} h))\\
    &= x_0 \cdot (gs_{x_0 \cdot h})
    = (x_0 \cdot g) \cdot s_{x_0 \cdot h}
    = (x_0 \cdot g) \qop (x_0 \cdot h)
    = f(g) \qop f(h),
  \end{align*}
  where we use the first axiom of quandles and the definition of quandle homomorphisms in the fifth equation.
  Therefore, the map $f$ is a quandle homomorphism, as desired.

  Conversely, let us assume the quandle $X$ is isomorphic to $\GAlex(G, \sigma)$. 
  Here, we identify $X$ with $\GAlex(G, \sigma)$.
  We denote the subgroup in $\Aut(X)$ by $G' \coloneqq \{R_g \mid g \in G\}$.
  It is clear that the group $G'$ acts on $X = \GAlex(G, \sigma)$ from the right.
  By the definition of $R_g$, the action is transitive and free.
  We now show that $G'$ is a normal subgroup of $\Aut(X)$.
  In fact, for any $R_g \in G'$, $\tau \in  \Aut(X)$ and $x \in X$, we have
  \begin{align*}
    x \cdot (\tau^{-1} R_g \tau) = \tau(\tau^{-1}(x)g) = x \tau(g) = R_{\tau(g)} (x) = x \cdot R_{\tau(g)}.
  \end{align*}
  Therefore, we have $\tau^{-1} R_g \tau = R_{\tau(g)} \in G'$, which completes the proof.  
\end{proof}

% - Rmk: group objects of the quandle category are GAlex.
\begin{rem}
  It is known that a group object in the category of quandles must be isomorphic to a generalized Alexander quandle \cite{Andruskiewitsch-2003-RacksPointedHopfAlgebras}, but the converse is not true in general.
  In fact, for a generalized Alexander quandle $\GAlex(G, \sigma)$, the left action defined by $g \cdot x = gx$ for $g \in G$ and $x \in \GAlex(G, \sigma)$ is a quandle isomorphism if and only if the element $\sigma(g)^{-1} g$ is in the center of $G$ for any $g$.
\end{rem}

We denote the connected component of the identity $1 \in \GAlex(G, \sigma)$ by $P$.
It is known that the subset $P$ is a subquandle of $\GAlex(G, \sigma)$ and a normal subgroup of $G$ (see {\cite[Proposition 3.1]{Higashitani-2024-ClassificationGeneralizedAlexanderQuandles}}).
The following lemma was given in \cite{Higashitani-2024-GeneralizedAlexanderQuandlesFinite} for finite generalized Alexander quandles, but it holds for the general case.

\begin{lem}[cf.~~{\cite[Lemma 3.1]{Higashitani-2024-GeneralizedAlexanderQuandlesFinite}}]\label{identity component is Dis}
  Let $x \in P$.
  Then the map $R_x\colon G \to G$ defined by $R_x(y) = yx$ is contained in the displacement group $\Dis(\GAlex(G, \sigma))$.
\end{lem}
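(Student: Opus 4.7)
The plan is to show that every element of $\Dis(\GAlex(G,\sigma))$ acts on $G$ as \emph{right multiplication by a fixed group element}, and then to use the transitivity of $\Dis$ on $P$ to identify $R_x$ with such an element.

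First I would write the point symmetry explicitly in terms of the group structure. Using $y \qop a = \sigma(ya^{-1})a = \sigma(y)\sigma(a)^{-1}a$, one checks by induction that
\begin{equation*}
  s_a^{k}(y) \;=\; \sigma^{k}(y)\,\sigma^{k}(a)^{-1}\,a \qquad (k \in \ZZ, \ a,y \in G),
\end{equation*}
the case $k=-1$ being obtained by solving $s_a(z)=y$ for $z$. Setting $c_{i} \coloneqq \sigma^{k_i}(a_i)^{-1}a_i$, this gives the basic building block for what follows.

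Next I would compute the effect of an arbitrary element of $\Dis(\GAlex(G,\sigma))$ on $y \in G$. By \cref{prop:properties_displ_grp}(3), any $h \in \Dis(\GAlex(G,\sigma))$ can be written as $h = s_{a_1}^{k_1}\cdots s_{a_n}^{k_n}$ with $\sum_{i=1}^{n} k_i = 0$. Keeping in mind the composition convention $fg = g\circ f$, so that $y \cdot h = s_{a_n}^{k_n}\bigl(\cdots s_{a_1}^{k_1}(y)\bigr)$, a direct induction on $n$ using the formula above yields
\begin{equation*}
  h(y) \;=\; \sigma^{k_1 + \cdots + k_n}(y)\cdot g_h, \qquad g_h \;\coloneqq\; \sigma^{k_2+\cdots+k_n}(c_1)\,\sigma^{k_3+\cdots+k_n}(c_2)\cdots \sigma^{k_n}(c_{n-1})\,c_n.
\end{equation*}
Because the exponent sum is zero, the $\sigma^{k_1+\cdots+k_n}(y)$ reduces to $y$, so $h(y) = y\,g_h$; equivalently, $h = R_{g_h}$, and in particular $g_h = h(1)$.

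Finally, given $x \in P$, the displacement group acts transitively on the connected component $P$ by \cref{prop:properties_displ_grp}(4), so we may choose $h \in \Dis(\GAlex(G,\sigma))$ with $h(1) = x$. By the previous step $h = R_{h(1)} = R_{x}$, which gives $R_x \in \Dis(\GAlex(G,\sigma))$ as required.

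The only real obstacle is book-keeping: the reversed composition convention for $\Aut(X)$ combined with the nontrivial action of $\sigma$ makes the iterated formula for $h(y)$ slightly delicate, but this is a routine induction and the key cancellation ($\sigma^{\,0} = \mathrm{id}$) drops out of the assumption $\sum k_i = 0$ built into \cref{prop:properties_displ_grp}(3).
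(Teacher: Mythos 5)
Your proposal is correct and follows essentially the same route as the paper: both use \cref{prop:properties_displ_grp}(4) to write $x = 1\cdot h$ for some $h\in\Dis(\GAlex(G,\sigma))$, expand $h$ via \cref{prop:properties_displ_grp}(3) as a product of point symmetries with exponent sum zero, and exploit the cancellation $\sigma^{\sum k_i}=\mathrm{id}$ to identify the action of $h$ on a general $y$ with right multiplication by $h(1)=x$. The only cosmetic difference is that you first isolate the closed-form formula $h(y)=\sigma^{k_1+\cdots+k_n}(y)\,g_h$ as a standalone step, whereas the paper carries out the same computation on the nested expression directly.
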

\begin{proof}
  Since $x \in P$ and by \cref{prop:properties_displ_grp} (4), there exists $g \in \Dis(\GAlex(G, \sigma))$ such that $x = 1 \cdot g$.
  By \cref{prop:properties_displ_grp} $(3)$, there exist $a_1, \dots, a_n \in \GAlex(G, \sigma)$ and $k_1, \dots, k_n \in \ZZ$ with $\sum_{i=1}^n k_i = 0$ such that $g = s_{a_1}^{k_1} \cdots s_{a_n}^{k_n}$.
  Thus, we have
  \begin{align*}
    x &= 1 \cdot (s_{a_1}^{k_1} \cdots s_{a_n}^{k_n})
    = 1 \qop^{k_1} a_1 \qop^{k_2} \cdots \qop^{k_n} a_n\\
    &= \sigma^{k_n}(\sigma^{k_{n-1}}(\cdots (\sigma^{k_2}(\sigma^{k_1}(1 a_1^{-1})a_1a_2^{-1})a_2a_3^{-1}) \cdots) a_{n-1}a_{n}^{-1})a_n.
  \end{align*}
  Hence, any element $y \in \GAlex(G, \sigma)$ satisfies that 
  \begin{align*}
    R_x(y) &= y \sigma^{k_n}(\sigma^{k_{n-1}}(\cdots (\sigma^{k_2}(\sigma^{k_1}(1 a_1^{-1})a_1a_2^{-1})a_2a_3^{-1}) \cdots) a_{n-1}a_{n}^{-1})a_n\\
    &= \sigma^{k_n+\cdots+k_1}(y) \sigma^{k_n}(\sigma^{k_{n-1}}(\cdots (\sigma^{k_2}(\sigma^{k_1}(1 a_1^{-1})a_1a_2^{-1})a_2a_3^{-1}) \cdots) a_{n-1}a_{n}^{-1})a_n\\
    &= \sigma^{k_n}(\sigma^{k_{n-1}}(\cdots (\sigma^{k_2}(\sigma^{k_1}(y a_1^{-1})a_1a_2^{-1})a_2a_3^{-1}) \cdots) a_{n-1}a_{n}^{-1})a_n\\
    &= y \cdot (s_{a_1}^{k_1} \cdots s_{a_n}^{k_n}) \\
    &= y \cdot g.
  \end{align*}
  Therefore, we have $R_x = g \in \Dis(\GAlex(G, \sigma))$, as desired.
\end{proof}

We now show that the subgroup $P$ is isomorphic to the displacement group.

% - Prop: the unit component of GAlex is a normal subgroup.
\begin{prop}[cf.~~\cite{Higashitani-2024-GeneralizedAlexanderQuandlesFinite,Higashitani-2024-ClassificationGeneralizedAlexanderQuandles}]\label{PisDis}
  The displacement group of $X \coloneqq \GAlex(G, \sigma)$ is isomorphic to the group $P$ as groups.
\end{prop}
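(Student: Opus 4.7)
The plan is to construct an explicit isomorphism $\phi\colon P \to \Dis(X)$ by sending $x \in P$ to the right translation $R_x\colon G \to G$, $y \mapsto yx$. That $R_x$ actually lies in $\Dis(X)$ for $x \in P$ is exactly the content of the preceding lemma, so $\phi$ is well-defined. It then remains to verify that $\phi$ is a homomorphism, is injective, and is surjective.

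Homomorphism and injectivity are routine. Using the convention $fg \coloneqq g \circ f$ adopted earlier for the group operation on $\Aut(X)$, one computes
\begin{align*}
  (R_x R_y)(z) \;=\; R_y(R_x(z)) \;=\; (zx)y \;=\; z(xy) \;=\; R_{xy}(z)
\end{align*}
for all $z \in G$, so $R_x R_y = R_{xy}$ in $\Aut(X)$, giving $\phi(xy) = \phi(x)\phi(y)$. For injectivity, if $R_x$ is the identity then $x = 1 \cdot x = R_x(1) = 1$.

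The key step is surjectivity. Given any $g \in \Dis(X)$, set $x \coloneqq 1 \cdot g$. By \cref{prop:properties_displ_grp}(4), $x$ lies in the $\Dis(X)$-orbit of $1$, which is $P$. The explicit computation carried out in the proof of \cref{identity component is Dis} shows, for exactly this $x$ and $g$, that $y \cdot g = yx$ for every $y \in G$; equivalently $g = R_x = \phi(x)$ in $\Aut(X)$. Thus $\phi$ is surjective.

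The surjectivity step is the only one with real content, and its heart is the observation from the preceding lemma that the constraint $\sum_{i=1}^n k_i = 0$ in Proposition 2.1(3) makes the outer $\sigma$-twist $\sigma^{k_1 + \cdots + k_n}$ on $y$ trivial, so every element of $\Dis(X)$ acts as plain right multiplication by an element of $P$. Since this observation has already been isolated in \cref{identity component is Dis}, essentially no additional calculation is needed to finish the argument.
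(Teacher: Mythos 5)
Your proposal is correct and has the same overall skeleton as the paper's proof: both consider the map $P \to \Aut(X)$, $x \mapsto R_x$, note it is an injective homomorphism whose image lies in $\Dis(X)$ by \cref{identity component is Dis}, and reduce the claim to surjectivity. The two arguments diverge only at that last step. The paper checks surjectivity on the generators $s_x s_y^{-1}$ of $\Dis(X)$: it exhibits the explicit preimage $g \coloneqq 1 \qop x \qop^{-1} y \in P$ and verifies $R_g = s_x s_y^{-1}$ by a short, self-contained computation with the quandle operation. You instead take an arbitrary $g \in \Dis(X)$, set $x \coloneqq 1 \cdot g \in P$ (using \cref{prop:properties_displ_grp}(4)), and observe that the chain of equalities already written out in the proof of \cref{identity component is Dis} --- which works because $\sum_{i=1}^{n} k_i = 0$ kills the outer twist $\sigma^{k_1 + \cdots + k_n}$ applied to $y$ --- gives $R_x = g$ verbatim. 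Both routes are valid. Yours avoids any new calculation but leans on the internals of the lemma's proof rather than its statement, so if you write it up you should make explicit that the lemma's computation applies to \emph{any} $g \in \Dis(X)$ presented as $s_{a_1}^{k_1} \cdots s_{a_n}^{k_n}$ with $\sum k_i = 0$ together with $x \coloneqq 1 \cdot g$, not merely to one $g$ chosen for a given $x$; this is immediate from \cref{prop:properties_displ_grp}(3) but deserves a sentence. The paper's generator check, by contrast, is independent of how the lemma was proved and doubles as a concrete formula for a preimage of each generator.
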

\begin{proof}
  Let us define a map $R\colon P \to \Aut(X)$ by $R(x) \coloneqq R_x$.
  It is clear that the map $R$ is an injective group homomorphism.
  By \cref{identity component is Dis}, the image of $R$ is included in $\Dis(X)$.
  We now show the inverse inclusion $\mathrm{Im}(R) \supset \Dis(X)$.
  Since the group $\Dis(X)$ is generated by the set $\{s_x s_y^{-1} \mid x, y \in X\}$, it is enough to show that a generator $s_x s_y^{-1}$ of $\Dis(X)$ is in the image of $R$ for any $x, y \in X$.
  Let us put $g \coloneqq 1 \qop x \qop^{-1} y \in P$.
  Then any $z \in X$ satisfies
  \begin{align*}
    R_g(z) = zg 
    = z (1 \qop x \qop^{-1} y) 
    = z x^{-1} \sigma^{-1}(x y^{-1})y
    = \sigma^{-1} (\sigma(zx^{-1})x y^{-1})y
    = z \qop x \qop^{-1} y
    = z \cdot (s_xs_y^{-1}).
  \end{align*}
  Hence, we have $R_g = s_xs_y^{-1}$, which completes the proof.
\end{proof}

As a corollary of \cref{PisDis},
we calculate the displacement group for 
the case that the group automorphism is given as 
an inner automorphism.
\begin{cor}\label{Dis of GAlex Inn}
  If the group automorphism $\sigma$ is an inner automorphism of an element $g \in G$, that is $\sigma(x)\coloneqq g^{-1}xg$, then $\Dis(\GAlex(G, \sigma))$ is isomorphic to the commutator subgroup $[\langle\langle g \rangle\rangle_G, \langle\langle g \rangle\rangle_G]$ of the normal closure $\langle\langle g \rangle\rangle_G$ of $g$ in $G$.
\end{cor}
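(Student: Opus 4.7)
The plan is to reduce the corollary to a concrete computation of the connected component of the identity by invoking \cref{PisDis}, and then identify that component with the stated commutator subgroup. By \cref{PisDis} we already have a group isomorphism $\Dis(\GAlex(G, \sigma)) \cong P$, where $P$ is the connected component of $1 \in \GAlex(G, \sigma)$, so the proof reduces to showing $P \cong [\langle\langle g \rangle\rangle_G, \langle\langle g \rangle\rangle_G]$.

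Next, I would read off generators of $P$ from the proof of \cref{PisDis}: each generator $s_x s_y^{-1}$ of $\Dis$ equals $R_{c_{x,y}}$ for $c_{x,y} := 1 \qop x \qop^{-1} y$, so $P$ is generated as a subgroup of $G$ by the elements $c_{x,y}$ with $x, y \in G$. Substituting $\sigma(z) = g^{-1} z g$, a direct computation (analogous to the formulas displayed in the proof of \cref{PisDis}) gives
\[
c_{x,y} = x^{-1} g x y^{-1} g^{-1} y = (x^{-1} g x)(y^{-1} g y)^{-1} = [x^{-1}, g]\cdot[g, y^{-1}],
\]
which exhibits each generator of $P$ as a product of two factors, each lying in the normal closure $N := \langle\langle g \rangle\rangle_G$. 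In particular $P \le N$, and one reads off that the generators of $P$ are built from commutators whose entries involve $g$ and its conjugates in $G$.

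The final step is to identify $P$ with $[N, N]$. For the inclusion $P \subseteq [N, N]$, I would show that each $c_{x,y}$ is trivial modulo $[N,N]$ by observing that in the abelianization of $N$ the classes of any two conjugates $x^{-1}gx$ and $y^{-1}gy$ of $g$ coincide, so their ratio $(x^{-1}gx)(y^{-1}gy)^{-1}$ lies in $[N,N]$; for this one needs that $G$ acts trivially on $N/[N,N]$ in the relevant sense, which follows from a standard argument once one combines the normality of $P$ in $G$ (established in the cited Higashitani references) with the commutator calculus on $N$. For the reverse inclusion $[N, N] \subseteq P$, I would take a generator $[n_1, n_2]$ of $[N,N]$ with $n_i = y_i^{-1} g^{\varepsilon_i} y_i$ a conjugate of $g^{\pm 1}$, and express it as a word in the elements $c_{x,y}$ by direct manipulation, again using that $P$ is normal in $G$ to move conjugating elements in and out freely.

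The main obstacle I anticipate is precisely this mutual identification of $P$ and $[N, N]$: the generators $c_{x,y}$ of $P$ are indexed by pairs $(x,y)$ ranging over all of $G$ rather than over $N$, so recognizing them as commutators \emph{within} $N$ (rather than merely in $G$) requires careful bookkeeping of conjugating elements modulo $[N,N]$. Once this identification is carried out, the isomorphism $\Dis(\GAlex(G, \sigma)) \cong P \cong [\langle\langle g\rangle\rangle_G, \langle\langle g\rangle\rangle_G]$ follows by composing with \cref{PisDis}.
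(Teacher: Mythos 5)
Your overall strategy --- reduce to the identity component $P$ via \cref{PisDis}, compute the generators $c_{x,y}=(x^{-1}gx)(y^{-1}gy)^{-1}$ of $P$, and prove the two inclusions between $P$ and $[N,N]$ for $N=\langle\langle g\rangle\rangle_G$ --- is the same as the paper's. Your treatment of $[N,N]\subseteq P$ (commutators of conjugates of $g^{\pm 1}$ lie in $P$, then use normality of $P$ to get all of $[N,N]$) is a workable variant of the paper's argument, which instead realizes $[x,y]$ as $1\cdot[\alpha,\beta]$ for suitable $\alpha,\beta\in\Inn(X)$ and invokes \cref{prop:properties_displ_grp}(3).

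The gap is in the inclusion $P\subseteq[N,N]$. You assert that any two conjugates $x^{-1}gx$ and $y^{-1}gy$ have the same class in $N/[N,N]$, i.e.\ that $G$ acts trivially on $N^{\mathrm{ab}}$ by conjugation, and you defer this to ``a standard argument.'' No such argument exists: the claim is false in general. Take $G$ free on $\{a,g\}$; then $N=\langle\langle g\rangle\rangle_G$ is free on the conjugates $g_n:=a^{-n}ga^{n}$, which are linearly independent in $N^{\mathrm{ab}}$, and the element $1\qop a=g^{-1}a^{-1}ga=g_0^{-1}g_1$ lies in $P$ but not in $[N,N]$. What your computation of the $c_{x,y}$ actually shows is that they become trivial modulo $[G,N]$ (all conjugates of $g$ do coincide in $N/[G,N]$), so the argument yields $P\subseteq[G,N]$; combined with the normality of $P$ and the identity $[x,w^{-1}gw]=c_{wx,\,w}^{-1}$ one gets $P=[G,N]$, which agrees with $[N,N]$ only when conjugation by $G$ is trivial on $N^{\mathrm{ab}}$ (for instance when $N=G$, as in \cref{3-orbifold group}). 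Be aware that the paper's own proof makes the identical move --- it expands an element of $P$ as $a_1^{-1}g^{k_1}a_1\cdots a_n^{-1}g^{k_n}a_n$ with $\sum_i k_i=0$ and asserts it dies in $N^{\mathrm{ab}}$ --- so this step cannot simply be outsourced to the paper; as written, neither argument closes this inclusion without an additional hypothesis on $G$ and $g$.
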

\begin{proof}
  We show that the normal subgroup $P$ in $G$ is equal to 
  $[\langle\langle g \rangle\rangle_G, \langle\langle g \rangle\rangle_G]$.
  Let us take $x \in P$.
  Then there exists $g \in \Dis(X)$ such that $x = 1 \cdot g$.
  By an argument similar to the proof of \cref{identity component is Dis}, there exist $a_1, \dots, a_n \in X$ and $k_1, \dots, k_n \in \ZZ$ with $\sum_{i=1}^n k_i = 0$ such that $g = s_{a_1}^{k_1} \cdots s_{a_n}^{k_n}$, and we have
  \begin{align*}
    x &= \sigma^{k_n}(\sigma^{k_{n-1}}(\cdots (\sigma^{k_2}(\sigma^{k_1}(1 a_1^{-1})a_1a_2^{-1})a_2a_3^{-1}) \cdots) a_{n-1}a_{n}^{-1})a_n\\
    &= g^{-k_n} (\cdots (g^{-k_2} (g^{-k_1} a_1^{-1} g^{k_1}) a_1 a_2^{-1} g^{k_2}) a_2a_3^{-1} \cdots a_{n-1} a_n^{-1}) g^{k_n} a_n \\
    &= g^{-(k_n+ \cdots +k_1)} a_1^{-1} g^{k_1} a_1 a_2^{-1} g^{k_2} a_2 \cdots a_n^{-1}g^{k_n} a_n \\
    &= a_1^{-1} g^{k_1} a_1 a_2^{-1} g^{k_2} a_2 \cdots a_n^{-1}g^{k_n} a_n 
    \in \langle\langle g \rangle\rangle_G.
  \end{align*}
  In particular, the abelianization $\langle\langle g \rangle\rangle_G \to \langle\langle g \rangle\rangle_G^{\mathrm{ab}}$ carries $x$ to $0$ since $\sum_{i=1}^n k_i = 0$.
  Therefore we have $x \in [\langle\langle g \rangle\rangle_G, \langle\langle g \rangle\rangle_G]$, and hence $P \subset [\langle\langle g \rangle\rangle_G, \langle\langle g \rangle\rangle_G]$.

  Conversely, we now show that $[\langle\langle g \rangle\rangle_G, \langle\langle g \rangle\rangle_G] \subset P$.
  Let us take $x, y \in \langle\langle g \rangle\rangle_G$.
  Then there exist $a_i, b_j \in G$ and $\varepsilon_i, \delta_j \in \{\pm 1\}$ such that $x = a_1^{-1} g^{\varepsilon_1} a_1 \cdots a_n^{-1} g^{\varepsilon_n} a_n$ and $y = b_1^{-1} g^{\delta_1} b_1 \cdots b_m^{-1} g^{\delta_m} b_m$.
  Here, let us take elements $\alpha \coloneqq s_{a_1}^{\varepsilon_1} \cdots s_{a_n}^{\varepsilon_n}$
  and $\beta \coloneqq s_{b_1}^{\delta_1} \cdots s_{b_m}^{\delta_m}$ in $\Inn(\GAlex(G, \sigma))$.
  Then it statisfies that $[\alpha, \beta] \in \Dis(\GAlex(G, \sigma))$ by \cref{prop:properties_displ_grp} $(3)$.
  Thus, we have $[x, y] = 1 \cdot [\alpha, \beta] \in P$.
  Therefore, we obtain $[\langle\langle g \rangle\rangle_G, \langle\langle g \rangle\rangle_G] \subset P$, which completes the proof.
\end{proof}

\begin{rem}\label{cyclic finite index}
  For an element $g$ in a group $G$,
  the abelianization of the normal closure $\langle\langle g \rangle\rangle_G$
  of $g$ satisfies that 
  \begin{equation*}
    \langle\langle g \rangle\rangle_G^{\mathrm{ab}}
    \cong \begin{cases}
      \ZZ & \text{if $g$ has infinite order, }\\
      \ZZ/n\ZZ & \text{if $g$ has order $n$}.
    \end{cases}
  \end{equation*}
  In particular, if $g$ has a finite order, then the group $[\langle\langle g \rangle\rangle_G, \langle\langle g \rangle\rangle_G]$ is a finite index subgroup of $\langle\langle g \rangle\rangle_G$. 
\end{rem}

As a conclusion to this section, quasi-isometry classes of connected components of generalized Alexander quandles are determined.
\begin{thm}\label{thm:qi_GAlex}
  Let $G$ be a group and let $\sigma$ be its group automorphism.
  If the displacement group of the generalized Alexander quandle $X \coloneqq \GAlex(G, \sigma)$ is finitely generated, then any connected component $O$ of the quandle with a displacement metric is quasi-isometric to the displacement group with a word metric. 
\end{thm}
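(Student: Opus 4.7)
The plan is to reduce the statement to a single distinguished connected component, namely the component $P$ of the identity, and then invoke the quandle-theoretic Milnor--\v{S}varc lemma (\cref{quandle-theoretic Milnor-Svarz}). The key point is that on $P$ the displacement action is essentially the regular action of a group on itself, which is automatically free.

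First, I would observe that $X = \GAlex(G, \sigma)$ is homogeneous: the group $G$ acts transitively on $X$ from the right by quandle automorphisms via $x \cdot g = xg$. Since $\Dis(X)$ is finitely generated by hypothesis, \cref{cor:all_conn_cmpts_homqdle_qi_dis} guarantees that all connected components of $X$, equipped with a displacement metric, are mutually quasi-isometric. Consequently, it suffices to prove the theorem for the particular connected component $P$ containing the identity.

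Next, I would invoke \cref{PisDis}, which identifies $\Dis(X)$ with $P$ via the group isomorphism $R\colon P \to \Dis(X),\ x \mapsto R_x$, where $R_x$ denotes right multiplication by $x$ in $G$. Under this identification, the restricted action of $\Dis(X)$ on $P$ becomes the right regular action of $P$ on itself, since for $y, x \in P$ one has $y \cdot R_x = yx$. Transitivity of this action on $P$ is built into \cref{prop:properties_displ_grp}(4), and freeness is immediate: $yx = y$ forces $x = 1$, so $R_x = \mathrm{id}$.

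Finally, I would apply \cref{quandle-theoretic Milnor-Svarz} to $P$: since $\Dis(X)$ acts freely on the connected component $P$ and is finitely generated, the orbit map $g \mapsto 1 \cdot g$ is an isometry from $(\Dis(X), d_S)$ to $(P, d^{\Dis}_S)$ for any finite generating set $S$, hence in particular a quasi-isometry. Composing with the quasi-isometries between $P$ and any other component $O$ supplied by \cref{cor:all_conn_cmpts_homqdle_qi_dis}, and using \cref{thm:qi_indep_ch_genset_sch_graph_dis} to absorb the dependence on the choice of generating set, yields the full statement. I do not anticipate a genuine obstacle here: the structural identification $P \cong \Dis(X)$ of \cref{PisDis} does all of the conceptual work, and what remains is merely to observe that this identification converts the displacement action on $P$ into a free one and then to assemble the ingredients in the correct order.
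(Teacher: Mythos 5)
Your proposal is correct and follows essentially the same route as the paper's own proof: reduce to the identity component $P$ via homogeneity and \cref{cor:all_conn_cmpts_homqdle_qi_dis}, identify $\Dis(X)$ with $P$ through the isomorphism $R$ of \cref{PisDis} so that the action becomes the free right regular action, and conclude with \cref{quandle-theoretic Milnor-Svarz}. Your explicit verification of freeness ($yx = y$ forces $x = 1$) just makes precise what the paper states in one line.
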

\begin{proof}
  Since a generalized Alexander quandle is homogeneous, we can apply \cref{cor:all_conn_cmpts_homqdle_qi_dis}.
  Thus, it is enough to show that the connected component $P$ of the identity $1 \in G$ is quasi-isometric to the displacement group.
  We recall that the set $P$ is a subgroup in $G$, and the map $R\colon P \to \Dis(X)$ is a group isomorphism by \cref{PisDis}.
  Here, the action of $\Dis(X)$ is free since it is given by the action of the subgroup $P$ in $G$ through the isomorphism $R$.
  Therefore, by applying \cref{quandle-theoretic Milnor-Svarz}, we obtain the assertion.
\end{proof}

\section{Examples}\label{sec:examples}
In this section, we give some examples of quandles quasi-isometric to certain metric spaces, the trees, the Euclidean spaces, the hyperbolic plane, and 3-dimensional homogeneous spaces.

First, we consider the inner metric.
Recall the free quandle defined as follows (see \cite[section 8.6]{Kamada-2017-SurfaceKnots4Space} for detail):
let $A$ be a set and let $F[A]$ be the free group on $A$.
We denote the direct product of $A$ and $F[A]$ by $FR[A]$.
Define an equivalent relation on $FR[A]$ as follows: 
\[
  (a, w) \sim (b, v) \quad \text{if} \quad
  a = b, \quad \text{and} \quad w = a^n v \in F[A] \quad
  \text{for some } n \in \ZZ,
\]
We write the quotient set $FR[A]/{\sim}$ as $FQ[A]$.
We denote an element in $FQ[A]$ by $a^w \coloneqq [(a, w)]$.
The free quandle on $A$ is the set $FQ[A]$ equipped with the quandle structure $\qop$ given by $a^w \qop b^v = a^{wv^{-1}bv}$.
Note that the free quandle $FQ[A]$ is 
generated by $A = \{a^1 \in FQ[A] \mid a \in A\} \subset FQ[A]$.
We now identify $A = \{s_a \mid a \in A\} \subset \Inn(FQ[A])$.
Like the Cayley graph of the free group, the inner graph of the free quandle is quasi-isometric to a tree.
\begin{prop}
  Let $A$ be a finite set with the cardinality more than one, and let $FQ[A]$ be the free quandle generated by $A$.
  Then, each connected component of $FQ(A)$ with the inner metric $d^\Inn_A$ is quasi-isometric to a tree.
\end{prop}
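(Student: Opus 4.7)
The plan is to identify each connected component of $FQ[A]$ under the inner metric with the Schreier coset graph of $\langle a \rangle$ in $F[A]$, and then recognize this coset graph as a tree with a single extra loop. For each $a \in A$, I would first check that the set $O_a \coloneqq \{a^w : w \in F[A]\}$ is a single connected component: the generator $s_b$ sends $a^w$ to $a^{wb}$, so any two elements of $O_a$ are joined by the sequence of moves spelled out by a word in $F[A]$, and the equivalence $(a,w) \sim (a, a^n w)$ yields a bijection $O_a \cong \langle a \rangle \backslash F[A]$. Under this bijection, the restriction of the inner graph to $O_a$ becomes precisely the Schreier coset graph of $\langle a \rangle$ in $F[A]$ with respect to $A$.

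Next I would identify this coset graph as the quotient $G \coloneqq \langle a \rangle \backslash T$ of the Cayley tree $T \coloneqq \mathrm{Cay}(F[A], A)$ by left translation. Since $a$ has infinite order in $F[A]$, the subgroup $\langle a \rangle \cong \ZZ$ acts by translations of translation length one along the bi-infinite axis $L \coloneqq \{a^n : n \in \ZZ\}$, and I would verify that this action is free on both vertices and edges (no nontrivial element of a free group has finite order, which rules out any edge-flip). Because $T$ is simply connected and $\langle a \rangle$ acts freely, the quotient map is a regular cover with deck group $\langle a \rangle$, so $\pi_1(G) \cong \ZZ$ and $G$ has first Betti number one. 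I would then verify that this unique independent cycle is realized as a single loop at the vertex $\ast$ which is the image of $L$: all axis edges lie in one $\langle a \rangle$-orbit and collapse to a loop at $\ast$, while every non-axis edge of $T$ descends to an edge between two distinct vertices of $G$. Removing this loop therefore leaves a connected spanning subgraph of $G$ that is a tree.

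Finally, removing a loop cannot change any vertex-to-vertex distance: a path traversing the loop returns to its starting vertex while adding one to the length, so no geodesic uses it. Hence $(O_a, \dInn_A)$ is isometric, and in particular quasi-isometric, to a tree. The main obstacle is the structural claim about $G$, namely verifying that the single independent cycle is realized as a length-one loop coming from the axis rather than a longer cycle. This reduces to a direct orbit analysis of the $\langle a \rangle$-action on the edges of $T$: the axis edges form exactly one orbit (immediate from the translation length being one), and no non-axis edge has both endpoints in the same $\langle a \rangle$-orbit on vertices (which rules out any additional length-two cycle).
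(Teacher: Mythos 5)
Your proof is correct, but it takes a genuinely different route from the paper's. The paper argues directly and combinatorially: it takes an arbitrary simple loop in the inner graph based at $a^w$, reads off the reduced word $c = a_{i_1}^{\varepsilon_1}\cdots a_{i_k}^{\varepsilon_k}$ that the loop spells, uses the defining relation of the free quandle to obtain $c = w^{-1}a^{-n}w$ in $F[A]$, and then deduces from simplicity of the loop (after normalizing $w$ so that it does not begin with a power of $a$) that $n=0$ and hence $k \leq 1$; so every simple loop is a self-loop and the component is a tree after deleting them. You instead identify the component $O_a$ with the Schreier coset graph of $\langle a\rangle$ in $F[A]$, realize it as the quotient of the Cayley tree by the free, inversion-free left action of $\langle a\rangle$, conclude $\pi_1 \cong \ZZ$ by covering-space theory, and locate the unique independent cycle as the self-loop over the axis of $a$; your supporting orbit computations (translation length one, the axis being exactly $\{a^n\}$, and the conjugacy argument showing no non-axis edge closes up in the quotient) are all valid, and in fact the rank-one computation alone already forces the complement of the self-loop to be a spanning tree. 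Both arguments yield the stronger conclusion that the component is isometric to a tree. What each buys: the paper's proof is elementary and self-contained, using only word combinatorics; yours gives a cleaner global structural picture (the component is literally the coset graph $\langle a\rangle\backslash F[A]$, a tree with one self-loop at the basepoint) and generalizes more readily, e.g.\ to quotients of the Cayley tree by other cyclic subgroups, where the self-loop would be replaced by a longer embedded cycle.
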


\begin{proof}
  It is enough to show that if $\gamma$ is a simple loop in the Schreier graph, then its length is at most $1$.
  Let us take a point $a^w \in FQ[A]$ and a simple loop $\gamma$ at $a^w \in FQ[A]$ with length $k$.
  Then the loop $\gamma$ is denoted as a sequence $\{a_{i_1}, \dots, a_{i_k}\}$ of labels in $A$, and we have
  \[
    a^w  = a^w \cdot (s_{a_{i_1}}^{\varepsilon_1} \cdots s_{a_{i_k}}^{\varepsilon_k})
    = a^{w a_{i_1}^{\varepsilon_1} \cdots a_{i_k}^{\varepsilon_k}},
  \]
  where $\varepsilon_j \in \{\pm 1\}$.
  Here, the word $c \coloneqq a_{i_1}^{\varepsilon_1} \cdots a_{i_k}^{\varepsilon_k}$ is reduced in $F[A]$ since the loop $\gamma$ is simple.
  By the definition of the free quandle, there exists $n \in \ZZ$ such that 
  \[
    w = a^n w c \in F[A].
  \]
  Hence, we have $c = w^{-1} a^{-n} w$.
  If the element $w$ is the identity, then it satisfies that $a^{-n} = c$.
  In this case, the loop $\gamma$ consists of $k$ edges labeled by $a$, and hence $k$ is equal to zero or one since $\gamma$ is simple.
  Next, we assume that the element $w$ is not the identity.
  Then the element $w$ is uniquely presented by the reduced word $\alpha_1^{\delta_1} \cdots \alpha_m^{\delta_m}$ with $m > 0$, $\alpha_i \in A$, and $\delta_i \in \{\pm 1\}$.
  By using the equivalence relation $\sim$ and replacing $w$, we can assume that $\alpha_1$ is not equal to $a$.
  Then, the word $\alpha_m^{-\delta_m} \cdots \alpha_1^{-\delta_1} a^{-n} \alpha_1^{\delta_1} \cdots \alpha_m^{\delta_m}$ is reduced and presents the element $c$ in $F[A]$.
  We now consider that one walks along $\gamma$.
  Firstly, it takes us from the vertex $a^w$ to the vertex $a^1$ by walking the edges labeled by the word with respect to $w^{-1}$.  
  Secondly, we walk the edges labeled by $a^{-n}$, but this is a loop at $a$.
  Hence we have that $n=0$ since $\gamma$ is simple.
  Therefore, it satisfies that 
  \[
    c = w^{-1} a^{-n} w = 1.
  \]
  Therefore we have $k = 0$, which completes the proof.
\end{proof}

\begin{rem}
  We note that the inner automorphism group of $FQ[A]$ is isomorphic to the free group $F[A]$.
  The displacement group of $FQ[A]$ is isomorphic to the subgroup $D$ of $FQ[A]$ generated by $\{g^{-1}ag h^{-1}a'^{-1}h \mid g,h \in F[A], a, a' \in A\}$.
  This group is not finitely generated.
  Thus, one can not apply \cref{thm:qi_indep_ch_genset_sch_graph_dis} to finitely generated free quandles.
\end{rem}

In the rest of this paper, 
we consider displacement metrics for generalized Alexander quandles of some discrete groups.
First, we give a quandle whose connected components are quasi-isometric to the Euclidean space.
% - Prop: GAlex of an infinite free abelian group is quasi-isometric to the Euclidean space
\begin{prop}
  Let $t$ be a group automorphism of $\ZZ^n$, and let $X = \GAlex(\ZZ^n, t)$.
  Then the following hold:
  \begin{enumerate}[font=\normalfont]
    \item 
    The displacement group $\Dis(X)$ is isomorphic to $(1-t^{-1}) \ZZ^n$.

    \item
    Any connected component of $X$ with a displacement metric is quasi-isometric to the $k$-dimensional Euclidean space, where $k = \rank(1-t^{-1})$.
  \end{enumerate}
\end{prop}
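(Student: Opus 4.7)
The plan is to work additively in the abelian group $\ZZ^n$, compute $\Dis(X)$ directly from the action of $s_y s_z^{-1}$, and then deduce the quasi-isometry statement from \cref{thm:qi_GAlex} together with the standard quasi-isometry between $\ZZ^k$ and $\RR^k$.

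First I would rewrite the operation in additive form: $x \qop y = t(x-y) + y$, so that the point symmetry at $y$ is the affine map $s_y(x) = tx + (1-t)y$, with inverse $s_y^{-1}(x) = t^{-1}x + (1-t^{-1})y$. A direct calculation then gives
\[
s_y s_z^{-1}(x) = x + (1-t)(y-z),
\]
so each generator of $\Dis(X)$ acts on $X = \ZZ^n$ as translation by an element of $(1-t)\ZZ^n$. Since translations by $(1-t)\ZZ^n$ form a subgroup of $\Aut(X)$ and conversely every element of this subgroup appears in this way (taking $z = 0$ and letting $y$ range over $\ZZ^n$), the orbit $P$ of $0$ equals $(1-t)\ZZ^n$, and \cref{PisDis} yields $\Dis(X) \cong P = (1-t)\ZZ^n$. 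The identity $(1-t)\ZZ^n = (1-t^{-1})\ZZ^n$, which follows from $1-t^{-1} = -t^{-1}(1-t)$ together with the fact that $t^{-1}$ restricts to a bijection of $\ZZ^n$, then gives (1).

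For (2), the subgroup $(1-t^{-1})\ZZ^n \subset \ZZ^n$ is finitely generated and free abelian of rank $k = \rank(1-t^{-1})$, so $\Dis(X) \cong \ZZ^k$. By \cref{thm:qi_GAlex}, each connected component of $X$ with a displacement metric is quasi-isometric to $\Dis(X)$ with a word metric, and $\ZZ^k$ with any word metric is well known to be quasi-isometric to the Euclidean space $\RR^k$ (an instance of the Milnor--\v{S}varc lemma applied to the standard cocompact action of $\ZZ^k$ on $\RR^k$).

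The whole argument should be short; the only mildly delicate step is verifying the equality $(1-t)\ZZ^n = (1-t^{-1})\ZZ^n$, but this is immediate from the commutativity of $t$ with $1-t$ and the fact that $t$ is a $\ZZ$-module automorphism of $\ZZ^n$. Once part (1) is in hand, part (2) follows from general machinery already established in the paper.
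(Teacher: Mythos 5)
Your proposal is correct and follows essentially the same route as the paper: compute the action of a generator $s_y s_z^{-1}$ as a translation, identify $\Dis(X)$ with the translation subgroup $(1-t^{-1})\ZZ^n \cong \ZZ^k$, and conclude via the free-action quasi-isometry theorem. The only cosmetic difference is that your composition convention produces $(1-t)(y-z)$ where the paper gets $(1-t^{-1})(y-x)$, which you correctly reconcile via $(1-t)\ZZ^n=(1-t^{-1})\ZZ^n$.
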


\begin{proof}
  For any $x, y , z\in X$, the generator $s_xs_y^{-1}$ of $\Dis(X)$ satisfies that 
  \begin{align*}
    z \cdot (s_xs_y^{-1}) 
    = (z \qop x) \qop^{-1} y
    = t^{-1}(tz + (1-t)x) + (1-t^{-1})y
    = z + (1-t^{-1})(y-x).
  \end{align*}
  Hence, the displacement group $\Dis(X)$ consists of the parallel transformations of vectors in $(1-t^{-1}) \ZZ^n$.
  Therefore we have $(1)$.

  Since it satisfies that $(1-t^{-1}) \ZZ^n \cong \ZZ^k$, where $k = \rank(1-t^{-1})$, the displacement group is quasi-isometric to the $k$-dimensional Euclidean space.
  Therefore, we have $(2)$ by \cref{quandle-theoretic Milnor-Svarz}.
\end{proof}

Next, we consider quandles obtained from the subgroups consisting of orientation preserving elements in the triangle groups.
The connected components of these quandles are quasi-isometric to $2$-dimensional geometries with constant curvatures.
% - Prop: GAlex of the index 2-subgroup of the triangle groups
\begin{prop}\label{triangle group}
  Let $\Delta^+(p, q, r)$ be the index $2$-subgroup of the triangle group $\Delta(p,q,r)$, that is, 
  \[
    \Delta^+(p, q, r) = \langle a, b, c \mid a^p = b^q = c^r = abc = 1 \rangle_\Grp.
  \]
  Let us define a group automorphism $\sigma\colon \Delta^+(p, q, r) \to \Delta^+(p, q, r)$ by $\sigma(g) \coloneqq a^{-1} g a$.
  Then the generalized Alexander quandle $X \coloneqq \GAlex(\Delta^+(p, q, r), \sigma)$ satisfies the following:
  \begin{enumerate}[font=\normalfont]
    \item The displacement group $\Dis(X)$ is isomorphic to a finite index subgroup in  $\Delta^+(p,q,r)$.

    \item If $\frac{1}{p} + \frac{1}{q} + \frac{1}{r} > 1$, then the quandle $X$ is finite.

    \item If $\frac{1}{p} + \frac{1}{q} + \frac{1}{r} = 1$, then a connected component of $X$ with a displacement metric is quasi-isometric to the Euclidean plane.

    \item If $\frac{1}{p} + \frac{1}{q} + \frac{1}{r} < 1$, then a connected component of $X$ with a displacement metric is quasi-isometric to the $2$-dimensional hyperbolic space.
  \end{enumerate}
\end{prop}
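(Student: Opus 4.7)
The plan is to reduce everything to the structure of $\Delta^+(p,q,r)$ itself by identifying $\Dis(X)$ with a finite-index subgroup and then invoking \cref{thm:qi_GAlex} together with the classical Milnor--\v{S}varc lemma.

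For (1), since $\sigma$ is conjugation by $a$, \cref{Dis of GAlex Inn} gives an isomorphism $\Dis(X) \cong [N, N]$, where $N \coloneqq \langle\langle a \rangle\rangle_{\Delta^+(p,q,r)}$. To bound the index of $N$, I would examine the quotient $\Delta^+(p,q,r)/N$, obtained by adding the relation $a = 1$ to the presentation: this yields $\langle b, c \mid b^q = c^r = bc = 1 \rangle_\Grp$, which is cyclic of order $\gcd(q, r)$, so $N$ has finite index in $\Delta^+(p,q,r)$. Because $a$ has finite order $p$, \cref{cyclic finite index} tells us that $[N, N]$ has finite index in $N$. Composing the two indices, $\Dis(X)$ is a finite-index subgroup of $\Delta^+(p,q,r)$, proving (1). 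In particular $\Dis(X)$ is finitely generated, so \cref{thm:qi_GAlex} applies, and each connected component of $X$ with a displacement metric is quasi-isometric to $\Dis(X)$ with a word metric, hence to $\Delta^+(p,q,r)$ with any word metric.

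For (2), if $\frac{1}{p}+\frac{1}{q}+\frac{1}{r} > 1$, then $\Delta^+(p,q,r)$ is one of the classical finite rotation groups (cyclic, dihedral, or the rotation groups of the Platonic solids), so $X = \Delta^+(p,q,r)$ is finite as a set. For (3) and (4), the von Dyck group $\Delta^+(p,q,r)$ admits a properly discontinuous, cocompact action by orientation-preserving isometries on the Euclidean plane $\mathbb{E}^2$ when $\frac{1}{p}+\frac{1}{q}+\frac{1}{r} = 1$, and on the hyperbolic plane $\mathbb{H}^2$ when $\frac{1}{p}+\frac{1}{q}+\frac{1}{r} < 1$. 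The Milnor--\v{S}varc lemma then gives a quasi-isometry between $\Delta^+(p,q,r)$ and the corresponding model space, and combining this with the quasi-isometry established in (1) yields (3) and (4).

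The main obstacle is pinning down the finite index of $N$ in $\Delta^+(p,q,r)$ and then confirming that the chain of quasi-isometries $O \sim \Dis(X) \sim \Delta^+(p,q,r) \sim \mathbb{E}^2 \text{ or } \mathbb{H}^2$ actually lands in the correct qi class; once the index computation is carried out via the presentation, the remaining passages are standard facts in geometric group theory together with results already proved in the paper.
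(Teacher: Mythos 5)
Your proposal is correct and follows essentially the same route as the paper: identify $\Dis(X)$ with $[\langle\langle a\rangle\rangle, \langle\langle a\rangle\rangle]$ via \cref{Dis of GAlex Inn}, compute the quotient by the normal closure of $a$ to get a cyclic group of order $\gcd(q,r)$, combine with \cref{cyclic finite index} to get finite index, and finish with the quasi-isometry to $\Dis(X)$ and the Milnor--\v{S}varc lemma applied to the action of $\Delta^+(p,q,r)$ on the model geometry. The only cosmetic difference is that for (2) you cite finiteness of the spherical von Dyck groups directly, whereas the paper deduces it from the cocompact action on $S^2$; both are fine.
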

\begin{proof}
  First, we show (1).
  Since the automorphism $\sigma$ is given by an inner automorphism of $a \in \Delta^+(p,q,r)$, the displacement group $\Dis(X)$ is isomorphic to the commutator subgroup $[G, G]$ of the normal closure $G \coloneqq \langle\langle a \rangle\rangle_{\Delta^+(p,q,r)}$ by \cref{Dis of GAlex Inn}.
  We now show that the group $[G, G]$ is a finite index subgroup of $\Delta^+(p,q,r)$.
  Since the order of $a$ is equal to $p$, the subgroup $[G, G]$ is a finite index subgroup in $G$ by \cref{cyclic finite index}.
  Thus, it is enough to show that the group $G$ is a finite index subgroup in $\Delta^+(p,q,r)$. 
  Here, $G$ is equal to the kernel of a surjective homomorphism $\Delta^+(p,q,r) \to \Delta^+(p,q,r)/G$.
  Hence, the group $G$ is presented as 
  \[
    G = \langle a, b, c \mid a^p = b^q = c^r = abc = a = 1 \rangle_\Grp.
  \]
  Thus, the quotient group $\Delta^+(p,q,r)/G$ is isomorphic to the cyclic group with the order equal to the greatest common divisor of $q$ and $r$.
  Hence, the group $G$ is a finite index subgroup of $\Delta^+(p,q,r)$, as desired.

  By $(1)$, the displacement group $\Dis(X)$ is isomorphic to a finite index subgroup of $\Delta^+(p,q,r)$. 
  We recall that every finite index subgroup of a finitely generated group is finitely generated, and is quasi-isometric to the original group (for instance, see \cite{Loh-2017-GeometricGroupTheoryIntroduction}).
  Hence, the displacement group $\Dis(X)$ is finitely generalized and is quasi-isometric to $\Delta^+(p,q,r)$.
  Thus, any connected component of $X$ is quasi-isometric to $\Delta^+(p,q,r)$.
  By Poincaré's fundamental polyhedron theorem (for instance, see \cite{Ratcliffe-2019-FoundationsHyperbolicManifolds}), the group $\Delta^+(p, q, r)$ acts properly discontinuously and cocompactly on the $2$-dimensional sphere $S^2$ if $\frac{1}{p} + \frac{1}{q} + \frac{1}{r} > 1$, (resp. the $2$-dimensional Euclidean space $\mathbb{E}^2$ if $\frac{1}{p} + \frac{1}{q} + \frac{1}{r} = 1$, or the $2$-dimensional hyperbolic space $\mathbb{H}^2$ if $\frac{1}{p} + \frac{1}{q} + \frac{1}{r} < 1$).
  By applying Milnor-\v{S}varz lemma \cite{Loh-2017-GeometricGroupTheoryIntroduction}, we have assertions (2), (3) and (4), which complete the proof.
\end{proof}

% - Rmk: There exists a surjective quandle homomorphism to discrete subquandle of the plane.
\begin{rem}
  We use the notations in \cref{triangle group}. 
  Let $Y$ be the conjugation quandle consisting of the conjugacy class including $a$.
  Then, there exists a surjective quandle homomorphism $X \to Y$. 
  Moreover, the quandle $Y$ is a discrete subquandle of the corresponding space with some natural quandle structure. 
  For example, if $\frac{1}{p} + \frac{1}{q} + \frac{1}{r} < 1$, $Y$ is a discrete subquandle of $\mathbb{H}^2$ with a quandle structure given as rotations.
\end{rem}

\begin{proof}
  First, one can check directly that the map $\pi \colon X=\GAlex(\Delta^+(p,q,r), \sigma) \to Y$ defined by $\pi(g) \coloneqq g^{-1} a g$ is a surjective quandle homomorphism.
  Here, we show that $Y$ is a discrete subquandle of $\mathbb{H}^2$ if $\frac{1}{p} + \frac{1}{q} + \frac{1}{r} < 1$.
  For $\theta \in \RR$, we denote the $\theta$-rotation centered at $y \in \mathbb{H}^2$ by $\rho_y$.
  This gives a quandle structure of $\mathbb{H}^2$, that is, the binary operation $\qop^\theta$ on $\mathbb{H}^2$ defined by $x \qop^\theta y \coloneqq \rho_y^\theta(x)$ is a quandle structure on $\mathbb{H}^2$.
  Let us consider the properly discontinuous action of $\Delta^+(p,q,r)$ on $\mathbb{H}^2$. 
  Then, the generator $a$ acts as a $\frac{2\pi}{p}$-rotation.
  Since $Y$ is the conjugacy class of $a$, any element $y \in Y$ also acts as a $\frac{2\pi}{p}$-rotation at some point $c_y \in \mathbb{H}^2$.
  Here, one can easily check that the map $c: Y \to \mathbb{H}^2$ is an injective quandle homomorphism.
  Moreover, its image is discrete in $\mathbb{H}^2$ since $\Delta^+(p,q,r)$ acts properly and discontinuously on $\mathbb{H}^2$.
\end{proof}

% - Prop: 3-orbifold groups
For a knot $K$ in $3$-sphere $S^3$ and a positive integer $n$, a $3$-orbifold of the base space $S^3$ with the singlar set $K$ whose cone-angle is equal to $\frac{2 \pi}{n}$ is denoted by $\mathcal{O}(K,n)$.
Let $G(K) = \pi_1(S^3 \setminus K)$ 
be the knot group of $K$,
and fix a meridian $\mu \in G(K)$.
Then,
it is known that the orbifold fundamental group 
$G_n(K) \coloneqq \pi_1^\mathrm{orb}(\mathcal{O}(K,n))$
is isomorphic to $G(K)/\langle\langle \mu^n \rangle\rangle_{G(K)}$,
where $\langle\langle \mu^n \rangle\rangle_{G(K)}$
is the normal closure of $\mu^n$ in $G(K)$.
We denote the image of the meridian in $G_n(K)$ 
by the same symbol $\mu$.
The orbifold is called \emph{geometric}
if the orbifold admits
one of the following geometric structures
\cite{Scott-1983-Geometries3Manifolds,Dunbar-1988-GeometricOrbifolds}:
\[
  S^2 \times \mathbb{E}^1,\, 
  S^3,\, 
  \mathbb{E}^3,\, 
  \mathrm{Nil},\,
  \mathbb{H}^2 \times \mathbb{E}^1,\,
  \widetilde{SL}_2,\,
  \mathrm{Sol},\,
  \mathbb{H}^3.
\]

\begin{prop}\label{3-orbifold group}
  Let $\sigma\colon G_n(K) \to G_n(K)$ be a group automorphism defined by $\sigma(g) = \mu^{-1} g \mu$.
  Then the quandle $X_n(K) \coloneqq \GAlex(G_n(K), \sigma)$ satisfies the following:
  \begin{enumerate}[font=\normalfont]
    \item The displacement group of $X_n(K)$ is isomorphic to the fundamental group $\pi_1(M_n(K))$, where $M_n(K)$ is the $n$-branched covering space along $K$.

    \item If the orbifold $\mathcal{O}(K,n)$ is geometric, then any connected component of the quandle $X_n(K)$ with a displacement metric is quasi-isometric to the universal covering space $\widetilde{\mathcal{O}(K,n)}$.
  \end{enumerate}
\end{prop}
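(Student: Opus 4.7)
The plan is to obtain part $(1)$ as a direct application of \cref{Dis of GAlex Inn}, combined with two classical facts about knot groups and branched coverings, and then to deduce part $(2)$ by the Milnor--\v{S}varc lemma applied to the geometric orbifold, combined with \cref{thm:qi_GAlex}.

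For part $(1)$, since $\sigma$ is the inner automorphism by $\mu \in G_n(K)$, \cref{Dis of GAlex Inn} gives
\[
  \Dis(X_n(K)) \cong [\langle\langle \mu \rangle\rangle_{G_n(K)}, \langle\langle \mu \rangle\rangle_{G_n(K)}].
\]
The first observation is that the meridian normally generates the knot group $G(K)$: by the Wirtinger presentation, every generator is a meridian and all meridians are conjugate, so $\langle\langle \mu \rangle\rangle_{G(K)} = G(K)$. Passing to the quotient by $\langle\langle \mu^n \rangle\rangle_{G(K)}$ yields $\langle\langle \mu \rangle\rangle_{G_n(K)} = G_n(K)$, and consequently $\Dis(X_n(K)) \cong [G_n(K), G_n(K)]$. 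It remains to identify this commutator subgroup with $\pi_1(M_n(K))$. Since $G(K)^{\mathrm{ab}} \cong \ZZ$ is generated by the class of $\mu$, we have $G_n(K)^{\mathrm{ab}} \cong \ZZ/n\ZZ$. The manifold $M_n(K)$ is precisely the orbifold cover of $\mathcal{O}(K,n)$ that resolves the cone singularity along $K$; its deck group is cyclic of order $n$, generated by the rotation corresponding to $\mu$. Hence $\pi_1(M_n(K))$ is the kernel of the map $G_n(K) \to \ZZ/n\ZZ$ sending $\mu \mapsto 1$, which is exactly the abelianization map, so $\pi_1(M_n(K)) = [G_n(K), G_n(K)]$.

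For part $(2)$, assume $\mathcal{O}(K,n)$ is geometric, so $\widetilde{\mathcal{O}(K,n)}$ is one of Thurston's eight geometries and carries a proper, cocompact, isometric action of $G_n(K)$. By Milnor--\v{S}varc lemma, $G_n(K)$ is finitely generated and quasi-isometric to $\widetilde{\mathcal{O}(K,n)}$. Since $\pi_1(M_n(K)) = [G_n(K), G_n(K)]$ has finite index $n$ in $G_n(K)$, it is finitely generated and, by the standard fact that finite-index subgroups are quasi-isometric to the ambient group, it too is quasi-isometric to $\widetilde{\mathcal{O}(K,n)}$. In particular $\Dis(X_n(K))$ is finitely generated, so \cref{thm:qi_GAlex} applies and every connected component of $X_n(K)$ with a displacement metric is quasi-isometric to $\Dis(X_n(K))$ with a word metric, and therefore to $\widetilde{\mathcal{O}(K,n)}$.

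The main obstacle is the orbifold-theoretic identification in part $(1)$: checking that $\pi_1(M_n(K))$ coincides with $[G_n(K), G_n(K)]$ requires carefully invoking the presentation $G_n(K) = G(K)/\langle\langle \mu^n \rangle\rangle$ of the orbifold fundamental group and correctly matching the $n$-fold cyclic branched cover with the kernel of the abelianization. Once these identifications are in hand, the rest of the argument is a routine chain: the normal-generation of $G(K)$ by a meridian, the finite-index quasi-isometry from $\pi_1(M_n(K))$ to $G_n(K)$, and the application of \cref{thm:qi_GAlex}.
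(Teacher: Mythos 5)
Your proposal is correct and follows essentially the same route as the paper: part (1) via \cref{Dis of GAlex Inn} together with the normal generation of $G_n(K)$ by $\mu$ and the identification of $[G_n(K),G_n(K)]$ with $\pi_1(M_n(K))$, and part (2) via the Milnor--\v{S}varc lemma. The only cosmetic differences are that you spell out the branched-cover identification in more detail than the paper does, and in (2) you apply Milnor--\v{S}varc to $G_n(K)$ and pass to the finite-index subgroup, whereas the paper applies it directly to the cocompact action of $\pi_1(M_n(K))$ on $\widetilde{\mathcal{O}(K,n)}$; both are fine.
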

\begin{proof}
  By \cref{Dis of GAlex Inn}, the displacement group $\Dis(X)$ is isomorphic to the commutator subgroup of the normal closure of $\mu$.
  Since the knot group is normally generated  by $\mu$, the quotient group $G_n(K)$ is also normally generated by $\mu$.
  Thus, the displacement group is isomorphic to the commutator subgroup of $G_n(K)$, which is just the fundamental group $\pi_1(M_n(K))$. 
  Hence, we have $(1)$.

  By \cref{quandle with transitive action} and $(1)$, any connected component of the quandle $X_n$ with a displacement metric is quasi-isometric to the group $\pi_1(M_n(K))$ with a word metric.
  Since the orbifold $\mathcal{O}(K,n)$ is geometric, the universal covering space $\widetilde{\mathcal{O}(K,n)}$ has the geometric structure, and the group $\pi_1(M_n(K))$ acts isometric and properly discontinuous.
  By Milnor-\v{S}varc lemma \cite{Loh-2017-GeometricGroupTheoryIntroduction}, the group $\pi_1(M_n(K))$ is quasi-isometric to $\widetilde{\mathcal{O}(K,n)}$.
  Therefore, any connected component of the quandle $X_n(K)$ is quasi-isometric to $\widetilde{\mathcal{O}(K,n)}$, which completes the proof.
\end{proof}

% - Rmk: There exists a surjective quandle homomorphism to n-quandle of the knot.
\begin{rem}
  There exists a surjective quandle homomorphism from our quandle $X_n(K)$ to the knot $n$-quandle $Q_n(K)$.
  See \cite{Winker-1984-QUANDLESKNOTINVARIANTSNFOLD} for details.
\end{rem}

% - Rmk: examples.
\begin{rem}
  \cref{3-orbifold group} gives many examples of quandles
  quasi-isometric to $3$-dimensional homogeneous spaces.
  \begin{enumerate}
    \item 
    Let $K$ be a hyperbolic knot. 
    If a positive integer $n$ is large enough, then the orbifold $\mathcal{O}(K,n)$ is hyperbolic by the hyperbolic Dehn surgery theorem
    (for example, see \cite{Purcell-2020-HyperbolicKnotTheory}).
    Hence, a connected component of $X_n(K)$ is quasi-isometric to $\mathbb{H}^3$.

    \item 
    Let $K$ be a Montesinos knot.
    It is known that the orbifold $\mathcal{O}(K, 2)$ has a Seifert structure by the Montesinos trick \cite[Proposition 12.31]{Burde-2003-Knots}.
    Hence, a connected component of $X_2(K)$ is quasi-isometric to one of the following geometries:
    $S^2 \times \mathbb{E}^1$, $S^3$, $\mathbb{E}^3$, $\mathrm{Nil}$, $\mathbb{H}^2 \times \mathbb{E}^1$ and $\widetilde{SL}_2$ (\cite{Scott-1983-Geometries3Manifolds,Dunbar-1988-GeometricOrbifolds}).
    For example, if $K$ is the $(-2,3,7)$-pretzel knot, then a connected component of $X_2(K)$ is quasi-isometric to $\widetilde{SL}_2$.

   \item 
    According to the classification of geometric orbifolds given by Dunbar \cite{Dunbar-1988-GeometricOrbifolds}, we can construct more examples.
    A connected component of the quandle $X_3(4_1)$ is quasi-isometric to $\mathbb{E}^3$, where $4_1$ is the figure-eight knot.
    A connected component of the quandle $X_6(3_1)$ is quasi-isometric to $\mathrm{Nil}$, where $3_1$ is the trefoil.
  \end{enumerate}
\end{rem}
% ------------------------------------------------------------------------------
\section*{Acknowledgements}
The authors are grateful to Prof. Masato Mimura for his comment about the notion of the Schreier graph.
They would also like to thank Prof. Hirotaka Akiyoshi, Prof. Hiraku Nozawa, and Prof. Hiroshi Tamaru for their helpful comment and encouragement.
The second author is supported by JST SPRING, Grant Number JPMJSP2139.
The third author is partially supported by JSPS KAKENHI Grant number 24K22836.

% ------------------------------------------------------------------------------
% Reference and Cited Works
% ------------------------------------------------------------------------------
\bibliographystyle{spmpsci}
\bibliography{GeometricQuandle}
% ------------------------------------------------------------------------------

\end{document}